\documentclass[11pt]{amsart}
\usepackage{amsmath,amssymb,amsthm,booktabs,array}
\usepackage[margin=1.1in]{geometry}
\usepackage{hyperref}

\newtheorem{theorem}{Theorem}[section]
\newtheorem{lemma}[theorem]{Lemma}
\newtheorem{proposition}[theorem]{Proposition}

\theoremstyle{definition}
\newtheorem{definition}[theorem]{Definition}
\newtheorem{example}[theorem]{Example}
\theoremstyle{remark}
\newtheorem{remark}[theorem]{Remark}

\newcommand{\F}{\mathbb{F}}
\newcommand{\GP}{\mathbb{GP}}
\newcommand{\GH}{\mathbb{GH}}
\newcommand{\wt}{\mathrm{wt}}
\newcommand{\supp}{\mathrm{supp}}
\newcommand{\cC}{\mathcal{C}}
\newcommand{\cH}{\mathcal{H}}
\newcommand{\cI}{\mathcal{I}}

\title[Minimal $p$-ary codes from hierarchical posets]{Optimal and minimal $p$-ary linear codes from generalized order ideals of hierarchical posets}
\author{Rumi Melih Pelen}
\address{Department of Mathematics and Statistics, University of South Florida, Tampa, FL, USA}
\email{rmpelen@usf.edu}

\begin{document}

\begin{abstract}
Hyun, Kim, Wu and Yue constructed optimal and minimal binary linear codes from order ideals of hierarchical posets with two levels. Two different generalizations of the underlying antichain (simplicial complex) setting to odd characteristic are known: down-sets of $\F_p^n$ under the componentwise order, and support-closed subsets of $\F_q^m$. No generalization of the poset setting itself has appeared. We introduce generalized order ideals of a poset of order $p-1$, obtained by attaching multiplicities in $\{0,\dots,p-1\}$ to the elements of a poset, and study the two natural notions of order ideal that arise for hierarchical posets with two levels. Whenever the ideal meets the upper level, the resulting defining sets are neither down-sets nor support-closed. We determine the weight distributions of the associated complement codes, exhibit a family of Griesmer codes in which the upper element carries an arbitrary multiplicity, and, via the characteristic function of a generalized order ideal, obtain an infinite family of minimal $p$-ary codes of length $p^n-1$ and dimension $n+1$ violating the Ashikhmin--Barg condition.
\end{abstract}

\keywords{minimal linear code, Griesmer code, poset, simplicial complex, down-set, characteristic function}
\subjclass[2020]{94B05, 94A62, 06A07}
\maketitle

\section{Introduction}

Let $p$ be a prime and $D\subseteq\F_p^n$. The defining-set construction of Ding and Niederreiter \cite{DN07} associates with $D$ the linear code $\cC_D=\{(u\cdot x)_{x\in D}:u\in\F_p^n\}$, and a large part of the recent literature on few-weight, optimal and minimal linear codes consists of choosing $D$ so that the weight distribution of $\cC_D$ can be computed. Minimal codes, in which the support of no nonzero codeword contains the support of a linearly independent codeword, determine the access structure of the secret sharing schemes built on them \cite{YD06}. The Ashikhmin--Barg condition $w_{\min}/w_{\max}>(p-1)/p$ is sufficient for minimality \cite{AB98}, and constructing infinite families of minimal codes that violate it has been an active problem since the first binary examples of Chang and Hyun \cite{CH18} and Ding, Heng and Zhou \cite{DHZ18}; see \cite{HDZ18,BB19,XQ19,MQRT20,TQLZ21} for the odd characteristic case and for the geometric characterization of minimal codes.

Chang and Hyun \cite{CH18} took $D$ to be the complement of a simplicial complex of $\F_2^n$, and Hyun, Lee and Lee \cite{HLL20} developed the associated generating function technique. Hyun, Kim, Wu and Yue \cite{HKWY20} then observed that a simplicial complex is the set of order ideals of an antichain, replaced the antichain by an arbitrary poset on $[n]$, and worked out the case of hierarchical posets with two levels. They obtained optimal binary codes from complements of families of order ideals, and, using the characteristic function of a family of order ideals, infinite families of minimal binary codes violating the Ashikhmin--Barg condition. Wu, Lu, Cao and Qin \cite{WLQC23} later settled the minimality question for all families of order ideals of two-level hierarchical posets by geometric methods.

In odd characteristic, the antichain case has been generalized in two incompatible ways. Hyun, Kim and Na \cite{HKN19} order $\F_p^n$ componentwise, with $0<1<\dots<p-1$, and call a down-set of this lattice a simplicial complex of $\F_p^n$; the resulting codes are non-projective, and weight distributions were obtained for down-sets generated by one maximal element of support size at most two \cite{HKN19,WH20,SL21,MHL26}. Hu, Xu, Li, Zeng, Wang and Tang \cite{Hu24} instead call a subset of $\F_q^m$ a simplicial complex if it is closed under taking vectors of smaller support; such a set is a union of coordinate subspaces, the resulting codes are projective, and their weight distributions were determined completely, the Griesmer codes among them being Solomon--Stiffler codes. The sets of bounded Hamming weight used in \cite{HDZ18,BB19} and \cite[Section IV]{MQRT20} are unions of coordinate subspaces, hence simplicial complexes in the sense of \cite{Hu24}, so the first nonbinary families violating the Ashikhmin--Barg condition also belong to the antichain setting. The poset construction itself has not been carried to odd characteristic; the survey \cite{survey24} still treats posets only over $\F_2$.

The purpose of this paper is to fill this gap for hierarchical posets with two levels. We attach to every element of a poset $\mathbb{P}$ on $[n]$ a multiplicity in $\{0,\dots,p-1\}$, so that sub-multisets of the resulting multiset $[n]_{p-1}$ correspond to vectors of $\F_p^n$, and we call the outcome a generalized poset of order $p-1$. When $\mathbb{P}$ is an antichain, the generalized order ideals are exactly the down-sets of \cite{HKN19}, and those with all multiplicities equal to $p-1$ are exactly the simplicial complexes of \cite{Hu24}; both known frameworks are therefore the antichain case of ours. For a poset with comparable elements there are two natural notions of generalized order ideal, according to whether an element lying below a chosen element must be taken with full multiplicity $p-1$ (saturated) or merely with positive multiplicity (unsaturated). For the two-level hierarchical poset $\mathbb{H}(m,n)$ both notions produce defining sets that are neither down-sets nor unions of coordinate subspaces (Remark~\ref{rem:notdownset}), so the codes obtained here are outside both existing frameworks.

Our results are as follows. Restricting to multiplicities in $\{0,p-1\}$, we determine the weight distributions of the complement codes for both notions (Theorems~\ref{thm:complement-S} and~\ref{thm:complement-U}); the saturated notion gives at most six nonzero weights independently of the parameters, and the parity condition on $\wt(v)$ that governs the binary weights is replaced by the linear condition $\sum_{i\le m}v_i=0$ in $\F_p$. When the upper level is a single element, taken with any multiplicity $t\in\{1,\dots,p-1\}$, the saturated complement codes form a family of non-projective Griesmer codes with at most five nonzero weights, minimal exactly when $2\le m\le n-2$ (Theorem~\ref{cor:griesmer}), which we identify with a Belov-type anticode construction (Remark~\ref{rem:belov}). We then consider the code $\cC_f$ generated by the simplex-type code and the characteristic function $f$ of a generalized order ideal, the $p$-ary analogue of the second construction of \cite{HKWY20}. Its weight distribution follows from the same character sums (Lemma~\ref{lem:Cf-weight}), and we prove that for both notions $\cC_f$ is minimal exactly when the generalized order ideal has full support and the lower level has at least two elements, with the additional restriction $m\le n-2$ in the saturated case (Theorems~\ref{thm:main} and~\ref{thm:U}, Propositions~\ref{prop:necessary} and~\ref{prop:nonminimal}). These codes have length $p^n-1$, dimension $n+1$, and violate the Ashikhmin--Barg condition (Theorem~\ref{thm:main}, Proposition~\ref{prop:U-AB}). The codes $\cC_f$ are instances of the characteristic-function construction of Mesnager, Qi, Ru and Tang \cite{MQRT20}, where the sets treated in odd characteristic are unions of subspaces with pairwise trivial intersections. Their Theorem 3.11 shows that the union of two complementary subspaces of different dimensions yields minimal codes violating the Ashikhmin--Barg condition for every $p$. The set behind Theorem~\ref{thm:main} is obtained from that set by replacing the second subspace with a coset not through the origin; it is not closed under $x\mapsto-x$. The two families have the same length, dimension and minimum distance but different weight distributions, so the codes are inequivalent (Remark~\ref{rem:MQRT}), and ours includes the equal-dimension case for all $p$. The minimality proofs use the geometric characterization of minimal codes as cutting sets \cite{ABN,TQLZ21}, in the spirit of the direct support arguments of \cite{BB19}, rather than the Walsh spectrum criterion of \cite{HDZ18}, whose case analysis for odd $p$ is noted in \cite{MQRT20} to be laborious; the saturated case reduces to a cardinality bound, the unsaturated case to an explicit spanning argument.

The paper is organized as follows. Section~\ref{sec:prelim} recalls the two minimality criteria we use and introduces generalized posets, generalized order ideals and the associated generating functions. Section~\ref{sec:hier} describes the generalized order ideals of $\GH(m,n)$ and computes the character sums on which everything depends. Section~\ref{sec:complement} treats the complement codes and Section~\ref{sec:Cf} the codes $\cC_f$. Section~\ref{sec:ternary} removes the restriction on multiplicities for $p=3$, where every factor of the generating function simplifies, and determines the weight distributions of the ternary codes from arbitrary generalized order ideals of $\GH(m,n)$; this contains the ternary cases of \cite{HKN19,PL22}. Section~\ref{sec:conclusion} discusses what changes for intermediate multiplicities when $p\ge5$ and lists open problems.

Throughout, $p$ is an odd prime, $\zeta=\zeta_p$ a primitive complex $p$-th root of unity, and $[n]=\{1,\dots,n\}$. For $u,x\in\F_p^n$ we write $u\cdot x=\sum_i u_ix_i$, and for a subset $D\subseteq\F_p^n$ we consider the code
\begin{equation}\label{eq:CD}
\cC_D=\{c_D(u)=(u\cdot x)_{x\in D}:u\in\F_p^n\}.
\end{equation}

\section{Preliminaries}\label{sec:prelim}

\subsection{Minimal codes}
A nonzero codeword $c$ of a linear code $\cC$ over $\F_p$ is minimal if $\supp(c')\subseteq\supp(c)$ with $c'\in\cC$ implies $c'\in\F_p c$; $\cC$ is minimal if all its nonzero codewords are. We use the following two facts.

\begin{lemma}[Ashikhmin--Barg]\label{lem:AB}
If $w_{\min}/w_{\max}>(p-1)/p$, then $\cC$ is minimal.
\end{lemma}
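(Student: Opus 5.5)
We argue by contradiction, assuming $w_{\min}/w_{\max}>(p-1)/p$ and supposing $\cC$ is not minimal. Then there are nonzero codewords $a,b\in\cC$ with $\supp(b)\subseteq\supp(a)$ and $b\notin\F_p a$. The plan is to average the weights of the codewords $a-\lambda b$ for $\lambda\in\F_p^*$, and to show that one of them is forced to be too light compared with $a$.

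\emph{Step 1: the counting.} Positions outside $\supp(a)$ are also outside $\supp(b)$, so every $a-\lambda b$ vanishes there. Fix a position $i\in\supp(a)$.
\begin{itemize}
\item If $b_i=0$, then $a_i-\lambda b_i=a_i\neq0$ for all $p-1$ values of $\lambda$.
\item If $b_i\neq0$, then exactly one $\lambda\in\F_p^*$, namely $\lambda=a_i/b_i$, gives a zero. So $a_i-\lambda b_i\neq 0$ for exactly $p-2$ values of $\lambda$.
\end{itemize}
Summing over positions,
\[
\sum_{\lambda\in\F_p^*}\wt(a-\lambda b)=(p-1)\wt(a)-\wt(b).
\]

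\emph{Step 2: the averaging.} Since $b\notin\F_p a$, every $a-\lambda b$ with $\lambda\neq0$ is a nonzero codeword, so each term on the left is at least $w_{\min}$. This gives
\[
(p-1)w_{\min}\le (p-1)\wt(a)-\wt(b)\le (p-1)w_{\max}-w_{\min}.
\]
Rearranging, $p\,w_{\min}\le (p-1)w_{\max}$, that is, $w_{\min}/w_{\max}\le (p-1)/p$. This contradicts the hypothesis, so $\cC$ is minimal.

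The only delicate point is the bookkeeping in Step 1. One has to check that for $b_i\ne0$ exactly one $\lambda\in\F_p^*$ kills the coordinate; this holds because $a_i\neq0$ forces $a_i/b_i\neq0$. One also has to check that positions outside $\supp(a)$ contribute nothing, which is where the inclusion $\supp(b)\subseteq\supp(a)$ is used. Beyond that the argument is a direct double count, and no earlier result of the paper is needed.
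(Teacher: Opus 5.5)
Your proof is correct and complete: the double count $\sum_{\lambda\in\F_p^*}\wt(a-\lambda b)=(p-1)\wt(a)-\wt(b)$, combined with $a-\lambda b\neq0$ for every $\lambda\in\F_p^*$, gives $p\,w_{\min}\le(p-1)w_{\max}$, which contradicts the hypothesis. The paper does not prove Lemma~\ref{lem:AB} and only cites \cite{AB98}; your averaging argument over the codewords $a-\lambda b$ is essentially the standard proof from that source.
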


\begin{lemma}[geometric criterion, \cite{ABN,TQLZ21}]\label{lem:geom}
Let $D\subseteq\F_p^k$ span $\F_p^k$. Then $\cC_D$ is minimal if and only if for every nonzero $u\in\F_p^k$ the set $D\cap u^\perp$ spans the hyperplane $u^\perp=\{x:u\cdot x=0\}$.
\end{lemma}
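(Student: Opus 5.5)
The plan is to use the standard reformulation of minimality as a statement about hyperplanes, taking $\cC=\cC_D$ with $D$ spanning $\F_p^k$ so that $u\mapsto c_D(u)$ is injective. The support of $c_D(u)$ is $D\setminus u^\perp$, so the zero set of $c_D(u)$ is $D\cap u^\perp$. The inclusion $\supp(c_D(v))\subseteq\supp(c_D(u))$ is therefore equivalent to $D\cap u^\perp\subseteq D\cap v^\perp$, i.e. $D\cap u^\perp\subseteq v^\perp$. Also $c_D(v)\in\F_p c_D(u)$ holds exactly when $v\in\F_p u$, using injectivity. Hence $c_D(u)$ is minimal if and only if every $v$ with $D\cap u^\perp\subseteq v^\perp$ lies in $\F_p u$.

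Next I would translate this into spans. The condition $D\cap u^\perp\subseteq v^\perp$ is equivalent to $W_u:=\langle D\cap u^\perp\rangle\subseteq v^\perp$, i.e. $v\in W_u^\perp$. Since $W_u\subseteq u^\perp$, we have $W_u^\perp\supseteq\F_p u$, with equality if and only if $\dim W_u=k-1$, i.e. $W_u=u^\perp$. So $c_D(u)$ is minimal exactly when $D\cap u^\perp$ spans $u^\perp$. Quantifying over all nonzero $u$ gives the lemma.

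The only step needing care is the injectivity of $u\mapsto c_D(u)$, which is precisely where the hypothesis that $D$ spans $\F_p^k$ enters. Indeed, $c_D(u)=0$ forces $u\in D^\perp=0$. This also ensures every nonzero codeword has the form $c_D(u)$ with $u\neq0$, so the pointwise equivalence above covers the whole code. The remaining steps are elementary duality in $\F_p^k$, where $\dim W^\perp=k-\dim W$.
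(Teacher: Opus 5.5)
Your proof is correct and follows the paper's argument: the spanning hypothesis gives injectivity of $u\mapsto c_D(u)$, the equivalence $\supp(c_D(v))\subseteq\supp(c_D(u))\iff D\cap u^\perp\subseteq v^\perp$ reduces minimality to a condition on $D\cap u^\perp$, and you finish with a dimension count. Your step $W_u^\perp\supseteq\F_p u$, with equality iff $W_u=u^\perp$, is just the dual phrasing of the paper's observation that $D\cap u^\perp$ must lie in no hyperplane other than $u^\perp$.
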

\begin{proof}
Since $D$ spans $\F_p^k$, $u\mapsto c_D(u)$ is injective, and $\supp(c_D(v))\subseteq\supp(c_D(u))$ holds if and only if $D\cap u^\perp\subseteq v^\perp$. Hence $c_D(u)$ is not minimal if and only if $D\cap u^\perp$ is contained in $v^\perp$ for some $v\notin\langle u\rangle$, that is, if and only if $D\cap u^\perp$ lies in a hyperplane of $\F_p^k$ other than $u^\perp$, which is the case exactly when it does not span $u^\perp$. In \cite{ABN,TQLZ21} this is stated for projective codes, as the characterization of minimal codes by cutting blocking sets; the argument does not use that the elements of $D$ are pairwise independent.
\end{proof}

\subsection{Generalized posets of order $p-1$}

Let $\mathbb{P}=([n],\preceq)$ be a poset. The generalized poset of order $p-1$ on $\mathbb{P}$, denoted $\GP$, is the multiset $[n]_{p-1}$ in which every $i\in[n]$ has multiplicity $p-1$, together with $\preceq$. A sub-multiset $J$ of $[n]_{p-1}$ is recorded by its multiplicity vector $(v_1,\dots,v_n)\in\{0,\dots,p-1\}^n$, and we identify $J$ with the vector $v\in\F_p^n$. Thus sub-multisets of $[n]_{p-1}$ correspond bijectively to $\F_p^n$, and $J\subseteq J'$ means $v_i\le v'_i$ for all $i$. The support $\bar J\subseteq[n]$ of $J$ is $\{i:v_i\ne0\}$.

\begin{definition}\label{def:ideal}
Let $J$ be a sub-multiset of $[n]_{p-1}$.
\begin{enumerate}
\item[(U)] $J$ is an \emph{unsaturated generalized order ideal} if $\bar J$ is an order ideal of $\mathbb{P}$.
\item[(S)] $J$ is a \emph{saturated generalized order ideal} if $\bar J$ is an order ideal of $\mathbb{P}$ and, whenever $j\in\bar J$ and $i\prec j$, the multiplicity of $i$ in $J$ equals $p-1$.
\end{enumerate}
For a generalized order ideal $I$ (of either type) we write $I(\GP)$ for the set of generalized order ideals of the same type contained in $I$, viewed as a subset of $\F_p^n$.
\end{definition}

The word ``order'' in ``generalized poset of order $p-1$'' refers to the common multiplicity $p-1$, not to the cardinality of the poset. The following observation records that both known odd-characteristic frameworks are the antichain case of Definition~\ref{def:ideal}.

\begin{proposition}\label{prop:recover}
\begin{enumerate}
\item If $\mathbb{P}$ is an antichain, the two notions of Definition~\ref{def:ideal} coincide, $I(\GP)$ is the down-set $\langle v\rangle$ of $\F_p^n$ generated by the multiplicity vector $v$ of $I$ in the componentwise order of \cite{HKN19}, and every down-set of $\F_p^n$ is $\cI(\GP)$ for the family $\cI$ of its maximal elements.
\item If $\mathbb{P}$ is an antichain and every multiplicity of $I$ lies in $\{0,p-1\}$, then $I(\GP)=\F_p^{\bar I}\times0$ is a coordinate subspace, and $\cI(\GP)$ for a family $\cI$ of such ideals is a support-closed set (a simplicial complex of $\F_p^n$ in the sense of \cite{Hu24}); every support-closed set arises this way.
\item For $p=2$ the multiplicities are $0$ or $1$, both notions coincide, and Definition~\ref{def:ideal} reduces to the order ideals of $\mathbb{P}$ used in \cite{HKWY20}.
\end{enumerate}
\end{proposition}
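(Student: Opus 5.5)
The plan is to unwind Definition~\ref{def:ideal} for each part. The only work is to identify $I(\GP)$ concretely as a subset of $\F_p^n$, using the convention that a sub-multiset $J$ is the vector $v\in\{0,\dots,p-1\}^n$ and that $J\subseteq I$ means $v_i\le w_i$ componentwise, where $w$ is the multiplicity vector of $I$. Throughout, for a family $\cI$ I read $\cI(\GP)$ as $\bigcup_{I\in\cI}I(\GP)$, which is how the paper uses the notation.

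For (1), let $\mathbb{P}$ be an antichain. Then every subset of $[n]$ is an order ideal, and there are no relations $i\prec j$. Hence the condition in (U) is vacuous, and the extra condition in (S) is also vacuous, so the two notions coincide. Every sub-multiset is a generalized order ideal, and
\[
I(\GP)=\{v: v_i\le w_i\ \text{for all } i\}.
\]
This is exactly the down-set $\langle w\rangle$ of \cite{HKN19} in the componentwise order $0<1<\dots<p-1$. Now let $\Delta$ be any down-set of $\F_p^n$, and let $\cI$ be the set of its maximal elements, viewed as ideals. Since $\F_p^n$ is finite, every element of $\Delta$ lies below some maximal element, and each $\langle w\rangle$ with $w\in\cI$ is contained in $\Delta$. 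Therefore $\Delta=\bigcup_{w\in\cI}\langle w\rangle=\cI(\GP)$.

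For (2), suppose in addition that $w_i\in\{0,p-1\}$. Then $v\le w$ means $v_i=0$ for $i\notin\bar I$ and $v_i$ is arbitrary for $i\in\bar I$. So $I(\GP)=\F_p^{\bar I}\times 0$, the coordinate subspace on $\bar I$. A union of such subspaces is support-closed: if $x$ lies in one of them and $\supp(y)\subseteq\supp(x)$, then $y$ lies in the same subspace. Conversely, let $\Delta$ be support-closed and $x\in\Delta$. Then every vector whose support is contained in $\supp(x)$ lies in $\Delta$, so $\F_p^{\supp(x)}\times0\subseteq\Delta$. Hence $\Delta=\bigcup_{x\in\Delta}\F_p^{\supp(x)}\times 0$. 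Taking $\cI$ to be the ideals with multiplicity $p-1$ on $\supp(x)$ and $0$ elsewhere, for $x\in\Delta$ (or only the maximal supports), gives $\Delta=\cI(\GP)$.

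For (3), set $p=2$. The multiplicities lie in $\{0,1\}$, and $J$ is identified with the subset $\bar J$. The (S) condition requires multiplicity $p-1=1$ for every $i\prec j$ with $j\in\bar J$. This already holds, because $\bar J$ is an order ideal, so $i\in\bar J$. The two notions therefore coincide with ``$\bar J$ is an order ideal of $\mathbb{P}$'', which is the definition in \cite{HKWY20}.

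I expect no step to be a real obstacle. The only points needing care are the finiteness argument for maximal elements in (1) and checking that the support-closed definition of \cite{Hu24} matches the union-of-coordinate-subspaces description in (2).
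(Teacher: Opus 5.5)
Your proof is correct and follows the same route as the paper's: each part is an unwinding of Definition~\ref{def:ideal}, using that the saturation condition is vacuous for an antichain (and automatic for $p=2$, since every element of an order ideal already has multiplicity $1$), and that containment of sub-multisets is the componentwise order. You also spell out two points the paper leaves implicit: a finite down-set is the union of the boxes below its maximal elements, and a support-closed set $\Delta$ is the union of the coordinate subspaces $\F_p^{\supp(x)}\times0$ for $x\in\Delta$.
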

\begin{proof}
In an antichain no element lies below another, so the saturation condition is vacuous, $\bar J$ is always an order ideal, and $J\subseteq I$ means $J\preceq v$ componentwise. Part (2) is the special case where the box $\langle v\rangle$ is a coordinate subspace, and a union of coordinate subspaces is exactly a support-closed set. Part (3) is immediate.
\end{proof}

\subsection{Generating functions and the weight formula}

For $X\subseteq\F_p^n$ let $\cH_X(x_1,\dots,x_n)=\sum_{u\in X}\prod_i x_i^{u_i}\in\mathbb{Z}[x_1,\dots,x_n]$. Inclusion--exclusion gives, for $\cI=\{I_1,\dots,I_k\}$ and $\cI(\GP)=\bigcup_j I_j(\GP)$,
\begin{equation}\label{eq:incl-excl}
\cH_{\cI(\GP)}=\sum_{\emptyset\ne S\subseteq\cI}(-1)^{|S|+1}\cH_{\bigcap_{I\in S}I(\GP)} .
\end{equation}

\begin{lemma}\label{lem:weight}
Let $D=\F_p^n\setminus\cI(\GP)$. For $u\ne0$,
\[
\wt(c_D(u))=\frac{p-1}{p}\,|D|+\frac1p\sum_{y\in\F_p^*}\cH_{\cI(\GP)}(\zeta^{u_1y},\dots,\zeta^{u_ny}).
\]
\end{lemma}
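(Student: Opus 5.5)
The plan is the standard character-sum computation of the number of zero coordinates of $c_D(u)$. For $u\neq 0$ we write
\[
\wt(c_D(u))=|D|-\#\{x\in D:u\cdot x=0\},
\]
and detect the condition $u\cdot x=0$ by orthogonality of additive characters:
\[
\#\{x\in D:u\cdot x=0\}=\frac1p\sum_{x\in D}\sum_{y\in\F_p}\zeta^{y\,(u\cdot x)} .
\]
The term $y=0$ contributes $|D|/p$, so
\[
\wt(c_D(u))=\frac{p-1}{p}|D|-\frac1p\sum_{y\in\F_p^*}\sum_{x\in D}\zeta^{y(u\cdot x)} .
\]

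Next we use $D=\F_p^n\setminus\cI(\GP)$ to split the inner sum. We have
\[
\sum_{x\in D}\zeta^{y(u\cdot x)}=\sum_{x\in\F_p^n}\zeta^{y(u\cdot x)}-\sum_{x\in\cI(\GP)}\zeta^{y(u\cdot x)} .
\]
The full sum over $\F_p^n$ factors as $\prod_i\sum_{x_i\in\F_p}\zeta^{yu_ix_i}$. Since $u\neq0$ and $y\neq0$, some $yu_i$ is nonzero, so that factor vanishes and the full sum is $0$. This is the only place where $u\ne0$ is used, and it is the key step.

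It remains to recognize the sum over $\cI(\GP)$. For $x$ with entries viewed as integers in $\{0,\dots,p-1\}$,
\[
\zeta^{y(u\cdot x)}=\prod_i(\zeta^{u_iy})^{x_i}.
\]
This is well defined because $\zeta$ has order $p$, so exponents may be read modulo $p$. Summing over $x\in\cI(\GP)$ therefore gives exactly $\cH_{\cI(\GP)}(\zeta^{u_1y},\dots,\zeta^{u_ny})$.

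The two minus signs combine to a plus, which yields the stated formula. There is no real obstacle; the only point needing care is the integer-versus-$\F_p$ exponent identification just mentioned.
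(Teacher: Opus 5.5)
Your proposal is correct and follows essentially the same argument as the paper: character orthogonality detects the zero coordinates, the sum over $\F_p^n$ vanishes for $y\ne0$ and $u\ne0$, and the remaining sum over $\cI(\GP)$ is $\cH_{\cI(\GP)}$ evaluated at $(\zeta^{u_1y},\dots,\zeta^{u_ny})$. You also spell out the exponent identification modulo $p$, which the paper leaves implicit.
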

\begin{proof}
$\wt(c_D(u))=|D|-\frac1p\sum_{y\in\F_p}\sum_{x\in D}\zeta^{y\,u\cdot x}$, and $\sum_{x\in D}\zeta^{y\,u\cdot x}=-\sum_{x\in\cI(\GP)}\zeta^{y\,u\cdot x}$ for $y\ne0$, $u\ne0$.
\end{proof}

\section{Hierarchical posets with two levels}\label{sec:hier}

Let $1\le m\le n$, $U=[m]$, $V=[n]\setminus[m]$, and let $\GH(m,n)$ be the generalized poset of order $p-1$ on the hierarchical poset $\mathbb{H}(m,n)$ in which $U$ and $V$ are antichains and $i\prec j$ for all $i\in U$, $j\in V$. Every generalized order ideal is $I=A\cup B$ with $A\subseteq[m]_{p-1}$, $B\subseteq V_{p-1}$, and either $B=\emptyset$ or ($B\ne\emptyset$ and $\bar A=[m]$).

In this paper we restrict to multiplicities in $\{0,p-1\}$, so $A=(\bar A)_{p-1}$, $B=(\bar B)_{p-1}$, and set $a=|\bar A|$, $b=|\bar B|$. Write $u=(v,w)$ with $v\in\F_p^m$, $w\in\F_p^{n-m}$, $k=\wt(v)$, $\sigma=\sum_{i\le m}v_i\in\F_p$, $\delta=[w_{\bar B}=0]$, and $Q=(p-1)p^{n-1}$.

\begin{lemma}\label{lem:ideal-sets}
Let $I=A\cup B$ as above.
\begin{enumerate}
\item If $B=\emptyset$ then, for both notions, $I(\GP)=\F_p^{\bar A}\times0$, of size $p^a$.
\item If $B\ne\emptyset$ (so $\bar A=[m]$), then
\begin{align*}
I(\GP)&=(\F_p^m\times0)\ \cup\ \big((\F_p^*)^m\times(\F_p^{\bar B}\setminus0)\big) &&\text{(U)},\quad |I(\GP)|=p^m+(p-1)^m(p^b-1),\\
I(\GP)&=(\F_p^m\times0)\ \cup\ \big(\{-1\}^m\times(\F_p^{\bar B}\setminus0)\big) &&\text{(S)},\quad |I(\GP)|=p^m+p^b-1,
\end{align*}
where $\F_p^{\bar B}$ denotes the vectors of $\F_p^{n-m}$ supported on $\bar B$ and $-1=p-1$.
\end{enumerate}
\end{lemma}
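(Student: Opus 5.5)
The plan is to unwind Definition~\ref{def:ideal} for $\GH(m,n)$ directly. A sub-multiset $J\subseteq I$ is recorded by its multiplicity vector $(x,y)\in\F_p^m\times\F_p^{n-m}$, and $J\subseteq I$ means $x_i\le$ (multiplicity of $i$ in $I$) for every $i$. With multiplicities restricted to $\{0,p-1\}$, this says exactly that $\supp(x)\subseteq\bar A$ and $\supp(y)\subseteq\bar B$. Since $\mathbb{H}(m,n)$ has the form described just before the lemma, $\bar J$ is an order ideal precisely when either $\supp(y)=\emptyset$, or $\supp(y)\neq\emptyset$ and $\supp(x)=[m]$. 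I would therefore split $I(\GP)$ according to whether $y=0$.

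For part (1), $B=\emptyset$ forces $y=0$. Then $\bar J\subseteq U$, and $U$ is an antichain, so it is automatically an order ideal. The saturation condition in (S) is vacuous, because no $j\in\bar J$ has anything below it. Hence every $x$ supported on $\bar A$ occurs, which gives $\F_p^{\bar A}\times0$ with $p^a$ elements for both notions.

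For part (2), we have $\bar A=[m]$. The sub-multisets with $y=0$ are all of $\F_p^m\times0$, by the same argument as in part (1). Those with $y\neq0$ have $y\in\F_p^{\bar B}\setminus0$ and need $\supp(x)=[m]$. In case (U) this is the only constraint, so $x\in(\F_p^*)^m$. In case (S), choose $j\in\supp(y)\subseteq V$. Every $i\in U$ satisfies $i\prec j$, so $x_i=p-1$, which forces $x=(-1,\dots,-1)$. Conversely, this $x$ satisfies the saturation condition, since elements of $V$ are not below anything in $\bar J$.

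The two pieces are disjoint because one has $y=0$ and the other $y\ne0$. The cardinalities are therefore $p^m+(p-1)^m(p^b-1)$ and $p^m+p^b-1$. There is no real obstacle here. The only points needing care are the order-ideal condition on $\bar J$ in the hierarchical poset and the observation that saturation pins every lower coordinate to $-1$.
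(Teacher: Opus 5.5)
Your argument is correct and follows the same approach as the paper: the paper states this lemma without a separate proof, treating it as an immediate unwinding of Definition~\ref{def:ideal} via the description of the generalized order ideals of $\GH(m,n)$ given just before it. Your split by whether the upper-level part $y$ vanishes is exactly what is needed, using that $\bar J$ is an order ideal iff $y=0$ or $\supp(x)=[m]$, and that saturation forces every lower coordinate to $-1$ while leaving the maximal upper coordinates unconstrained.
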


\begin{remark}\label{rem:notdownset}
For $B\ne\emptyset$ the set $I(\GP)$ is not a down-set of $\F_p^n$ in the componentwise order (it contains $(-1,\dots,-1,x'')$ but not $(0,\dots,0,x'')$ for $x''\ne0$), and it is not support-closed. Hence neither the framework of Hyun--Kim--Na nor that of Hu et al.\ contains these defining sets. In case (U) the set $I(\GP)$ is closed under $\F_p^*$-scaling, so $\cC_D$ is a $(p-1)$-fold repetition of a projective code; in case (S) it is not, and $\cC_D$ is non-projective.
\end{remark}

\begin{lemma}[character sums]\label{lem:charsum}
Let $B\ne\emptyset$, $y\in\F_p^*$, $u=(v,w)\ne0$.
\begin{align*}
\text{(U)}\quad&\sum_{x\in I(\GP)}\zeta^{y\,u\cdot x}=p^m[v=0]+(-1)^k(p-1)^{m-k}\,(p^b\delta-1),\\
\text{(S)}\quad&\sum_{x\in I(\GP)}\zeta^{y\,u\cdot x}=p^m[v=0]+\zeta^{-y\sigma}\,(p^b\delta-1).
\end{align*}
\end{lemma}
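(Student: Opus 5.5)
The plan is to compute both sums directly from the explicit descriptions in Lemma~\ref{lem:ideal-sets}(2), using the fact that each $I(\GP)$ is a disjoint union of two product sets. For a product set $X\times Y$ the character sum factors, since $u\cdot x=v\cdot x'+w\cdot x''$ for $x=(x',x'')$:
\[
\sum_{(x',x'')\in X\times Y}\zeta^{y\,u\cdot x}=\Big(\sum_{x'\in X}\zeta^{y\,v\cdot x'}\Big)\Big(\sum_{x''\in Y}\zeta^{y\,w\cdot x''}\Big).
\]
The union $(\F_p^m\times0)\cup(X\times(\F_p^{\bar B}\setminus0))$ is disjoint because the second coordinate block is nonzero on the second piece. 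So the sum is the sum over $\F_p^m\times0$ plus the sum over the second piece.

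First, the sum over $\F_p^m\times 0$ is $\sum_{x'\in\F_p^m}\zeta^{y\,v\cdot x'}=p^m[v=0]$, by orthogonality of characters, since $y\neq0$. This is the common first term in both formulas.

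Second, I evaluate the $V$-factor $\sum_{x''\in\F_p^{\bar B}\setminus0}\zeta^{y\,w\cdot x''}$. The sum over all of $\F_p^{\bar B}$ only sees $w_{\bar B}$, and it equals $p^b$ if $w_{\bar B}=0$ and $0$ otherwise. Removing $x''=0$ then gives $p^b\delta-1$.

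Third, I evaluate the $U$-factor, which is where the two notions differ.
\begin{itemize}
\item In case (U), the factor is $\prod_{i\le m}\sum_{x_i\in\F_p^*}\zeta^{y v_i x_i}$. Each coordinate with $v_i=0$ contributes $p-1$. Each coordinate with $v_i\neq0$ contributes $-1$, because $yv_i\neq0$ and the sum of all nontrivial $p$-th roots of unity is $-1$. The product is therefore $(-1)^k(p-1)^{m-k}$.
\item In case (S), the set $X$ is the single point $(-1,\dots,-1)$. The factor is $\zeta^{y\,v\cdot(-1,\dots,-1)}=\zeta^{-y\sigma}$.
\end{itemize}
Multiplying the $U$-factor by $(p^b\delta-1)$ and adding the first term gives the stated formulas.

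There is no real obstacle. The only point needing care is the bookkeeping for the second piece. Its $V$-part excludes $0$ but only ranges over vectors supported on $\bar B$, so the orthogonality step involves $w_{\bar B}$ rather than all of $w$. This is exactly why the indicator $\delta=[w_{\bar B}=0]$ appears. The hypothesis $u\neq0$ is not actually needed for these identities.
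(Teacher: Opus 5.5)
Your proposal is correct and follows essentially the same route as the paper. The paper also splits $I(\GP)$ into $\F_p^m\times0$ and a product piece, then factors the sum using $\sum_{x'\in(\F_p^*)^m}\zeta^{y\,v\cdot x'}=\prod_{i\le m}(p[v_i=0]-1)$, $\sum_{x''\in\F_p^{\bar B}\setminus0}\zeta^{y\,w\cdot x''}=p^b\delta-1$ and $v\cdot(-1,\dots,-1)=-\sigma$; your side remark that $u\ne0$ is not needed for these identities is also right.
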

\begin{proof}
$\sum_{x'\in(\F_p^*)^m}\zeta^{y\,v\cdot x'}=\prod_{i\le m}(p[v_i=0]-1)$, $\sum_{x''\in\F_p^{\bar B}\setminus0}\zeta^{y\,w\cdot x''}=p^b\delta-1$, and $v\cdot(-1,\dots,-1)=-\sigma$.
\end{proof}

\section{Weight distributions of the complement codes}\label{sec:complement}

\begin{theorem}[$B=\emptyset$]\label{thm:B-empty}
Let $D=\F_p^n\setminus(\F_p^{\bar A}\times0)$ with $1\le a\le n-1$. Then $\cC_D$ is a two-weight Griesmer $[p^n-p^a,\,n,\,(p-1)(p^{n-1}-p^{a-1})]$ code with weights $(p-1)(p^{n-1}-p^{a-1})$ (frequency $p^n-p^{n-a}$) and $(p-1)p^{n-1}$ (frequency $p^{n-a}-1$). For $a=0$ (the empty ideal) $D=\F_p^n\setminus\{0\}$ and $\cC_D$ is the one-weight $[p^n-1,\,n,\,(p-1)p^{n-1}]$ code, the $(p-1)$-fold repetition of the simplex code.
\end{theorem}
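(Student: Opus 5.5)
We compute the weights with Lemma~\ref{lem:weight}, taking $\cI(\GP)=\F_p^{\bar A}\times0$; by Lemma~\ref{lem:ideal-sets}(1) this set is the same for both notions. The set $D$ has $|D|=p^n-p^a$ elements. For $y\ne0$ the character sum over the coordinate subspace is
\[
\sum_{x\in\F_p^{\bar A}\times0}\zeta^{y\,u\cdot x}=p^a\,[u_{\bar A}=0].
\]
Equivalently, the generating function factors as $\cH=\prod_{i\in\bar A}(1+x_i+\dots+x_i^{p-1})$, and each factor evaluated at $\zeta^{u_iy}$ equals $p[u_i=0]$. Summing over $y\in\F_p^*$ and substituting gives
\[
\wt(c_D(u))=\frac{p-1}{p}(p^n-p^a)+(p-1)p^{a-1}[u_{\bar A}=0]
\]
for every $u\ne0$.

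This yields the two weights. If $u_{\bar A}\ne0$, the weight is $(p-1)(p^{n-1}-p^{a-1})$. If $u_{\bar A}=0$ and $u\ne0$, the weight is $(p-1)p^{n-1}$. The frequencies follow by counting: there are $p^{n-a}-1$ nonzero vectors $u$ with $u_{\bar A}=0$, and $p^n-p^{n-a}$ vectors with $u_{\bar A}\ne0$.

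Next we check the parameters. Since $a\le n-1$, the set $D$ contains all standard basis vectors $e_j$ with $j\notin\bar A$, together with vectors $e_i+e_j$ for $i\in\bar A$, $j\notin\bar A$. Hence $D$ spans $\F_p^n$, the map $u\mapsto c_D(u)$ is injective, and the dimension is $n$. It also follows that both weights actually occur: the first because $a\ge1$, the second because $a\le n-1$.

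For the Griesmer property, put $d=(p-1)(p^{n-1}-p^{a-1})=p^{n}-p^{n-1}-p^a+p^{a-1}$. Then
\[
\lceil d/p^i\rceil=p^{n-i}-p^{n-1-i}-p^{a-i}+p^{a-1-i}\quad\text{for } i\le a-1,
\]
and $\lceil d/p^i\rceil=p^{n-i}-p^{n-1-i}$ for $i\ge a$. For $i\ge a$ this holds because the term subtracted, $p^{a-i}(1-1/p)$, lies in $(0,1)$ when $i\ge a$. Summing over $i=0,\dots,n-1$ telescopes to $(p^n-1)-(p^a-1)=p^n-p^a$, which equals the length. So the code meets the Griesmer bound. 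This ceiling bookkeeping is the only step that needs care, in particular the boundary case $i=a$.

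Finally, for $a=0$ the ideal is empty, so $D=\F_p^n\setminus\{0\}$ and the indicator term vanishes. Every nonzero $u$ then has weight $\frac{p-1}{p}(p^n-1)+\frac{1}{p}(-(p-1))=(p-1)p^{n-1}$. The code is one-weight, and since $D$ contains every nonzero multiple of each projective point, it is the $(p-1)$-fold repetition of the simplex code.
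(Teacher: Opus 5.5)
Your argument is correct and is essentially the one the paper relies on; the paper states this theorem without a separate proof. It uses Lemma~\ref{lem:weight} with the character sum $p^a[u_{\bar A}=0]$ over the coordinate subspace, frequency counting, and the same ceiling bookkeeping as in the proof of Proposition~\ref{prop:antichain-t}.

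There is one slip, in the $a=0$ paragraph. The indicator term does not vanish there: $[u_{\bar A}=0]$ is vacuously $1$, and the character sum over $\{0\}$ equals $1$. So the correction is $+\frac{p-1}{p}$, not $-\frac{p-1}{p}$. As written, your expression equals $\frac{p-1}{p}(p^n-2)$. Your general formula evaluated at $a=0$ already gives the right value directly: $\frac{p-1}{p}(p^n-1)+\frac{p-1}{p}=(p-1)p^{n-1}$.
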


The same one-coordinate principle as in Theorem~\ref{cor:griesmer} below applies to an antichain: one coordinate of the generalized order ideal may carry any multiplicity.

\begin{proposition}\label{prop:antichain-t}
Let $1\le a\le n-2$, $1\le t\le p-1$, and $D=\F_p^n\setminus(\F_p^a\times\{0,\dots,t\}\times0)$. Then $\cC_D$ is a Griesmer code with at most three nonzero weights, with parameters $[p^n-(t+1)p^a,\,n,\,d]_p$, $d=(p-1)p^{n-1}-(p-1)(t+1)p^{a-1}$, and weights
\[
\begin{array}{ll}
d & p^n-p^{n-a}\\
d+(p-t-1)p^{a-1} & (p-1)p^{n-a-1}\\
(p-1)p^{n-1} & p^{n-a-1}-1 .
\end{array}
\]
For $t=p-1$ the first two rows coincide and the code is the two-weight code of Theorem~\ref{thm:B-empty} with $a+1$ in place of $a$.
\end{proposition}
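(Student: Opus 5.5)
Write $E=\F_p^a\times\{0,\dots,t\}\times 0$, so that $D=\F_p^n\setminus E$ and $|D|=p^n-(t+1)p^a$. For $u=(v,s,w)\ne0$, with $v\in\F_p^a$, $s\in\F_p$ the $(a+1)$-st coordinate and $w\in\F_p^{n-a-1}$, the argument of Lemma~\ref{lem:weight} gives
\[
\wt(c_D(u))=\frac{p-1}{p}|D|+\frac1p\sum_{y\in\F_p^*}\sum_{x\in E}\zeta^{y\,u\cdot x}.
\]
The set $E$ is a product, so the inner sum factors as
\[
p^a[v=0]\cdot\sum_{j=0}^{t}\zeta^{ysj}.
\]

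\emph{Case $v\ne0$.} The sum over $E$ vanishes, and the weight is $\frac{p-1}{p}|D|=(p-1)p^{n-1}-(p-1)(t+1)p^{a-1}=d$. The number of such $u$ is $(p^a-1)p^{n-a}=p^n-p^{n-a}$.

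\emph{Case $v=0$, $s=0$, $w\ne0$.} The inner sum is $(t+1)p^a$ for every $y$. Summing over $y\in\F_p^*$ adds $(p-1)(t+1)p^{a-1}$, so the weight is $(p-1)p^{n-1}$. The frequency is $p^{n-a-1}-1$; this count is where $a\le n-2$ is used, since it guarantees $w$ ranges over a nonzero space.

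\emph{Case $v=0$, $s\ne0$.} Here $\sum_{y\in\F_p^*}\sum_{j=0}^t\zeta^{ysj}=\sum_{j=0}^t\bigl(p[j=0]-1\bigr)=p-t-1$, because $sj\ne0$ for $1\le j\le t\le p-1$. The weight is therefore $d+(p-t-1)p^{a-1}$, with frequency $(p-1)p^{n-a-1}$.

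The three frequencies sum to $p^n-1$. The weight of every nonzero $u$ is positive: it is at least $d>0$, because $(t+1)p^{a-1}\le p^a<p^{n-1}$. Hence $u\mapsto c_D(u)$ is injective and the dimension is $n$. When $t=p-1$ the second weight equals $d$, and $E=\F_p^{a+1}\times0$, which is exactly Theorem~\ref{thm:B-empty} with $a+1$ in place of $a$.

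It remains to check the Griesmer bound $\sum_{i=0}^{n-1}\lceil d/p^i\rceil=|D|$, which is the only delicate step. Write
\[
d=(p-1)p^{n-1}-(p-1)(t+1)p^{a-1},
\]
and expand the ceilings term by term.

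\emph{Terms $i\le a-1$.} Both parts of $d$ are divisible by $p^i$, so $\lceil d/p^i\rceil=d/p^i$.

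\emph{Term $i=a$.} We have $d/p^a=(p-1)p^{n-1-a}-(p-1)(t+1)/p$. Since $(p-1)(t+1)\equiv -(t+1)\pmod p$ and $1\le t+1\le p$, the ceiling subtracts $(p-1)(t+1)$ divided by $p$ rounded down. The result is $(p-1)p^{n-1-a}-(t+1)+\lceil (t+1)/p\rceil$; concretely this equals $(p-1)p^{n-1-a}-t$ for $t<p-1$, and $(p-1)p^{n-1-a}-(p-1)$ for $t=p-1$.

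\emph{Terms $i>a$.} Here $(p-1)(t+1)p^{a-1-i}<1$, so the ceiling is exactly $(p-1)p^{n-1-i}$.

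Summing all three groups and using the geometric series should give precisely $p^n-(t+1)p^a$. I would verify the $i\le a-1$ part first: it contributes $(p^n-p^{n-a})-(t+1)(p^a-1)$, i.e. $p^n-p^{n-a}-(t+1)p^a+(t+1)$. I would then check that the remaining terms supply $p^{n-a}-(t+1)$, which is where the ceiling analysis at $i=a$ must be done carefully, separating $t<p-1$ from $t=p-1$.
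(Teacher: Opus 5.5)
Your proposal is correct and follows the paper's proof: you factor the character sum over the box as $p^a[v=0]\sum_{j=0}^t\zeta^{ysj}$, split into the same three cases, and use the same term-by-term ceiling analysis for the Griesmer bound. The final sum you left as a check does close: $\lceil d/p^a\rceil=(p-1)p^{n-1-a}-t$ for every $t$ (your two cases give the same number when $t=p-1$, so no split is needed), and adding $\sum_{i=a+1}^{n-1}(p-1)p^{n-1-i}=p^{n-a-1}-1$ gives exactly $p^{n-a}-(t+1)$.
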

\begin{proof}
Write $u=(u_A,u_{a+1},u')$. The character sum over $\F_p^a\times\{0,\dots,t\}\times0$ is $p^a[u_A=0]\sum_{j=0}^t\zeta^{yu_{a+1}j}$, whose sum over $y\in\F_p^*$ is $p^a[u_A=0]\big((p-1)(t+1)[u_{a+1}=0]+(p-t-1)[u_{a+1}\ne0]\big)$; Lemma~\ref{lem:weight} gives the three weights. For the Griesmer bound, $d/p^i$ is an integer for $i\le a-1$; for $i=a$, $d/p^a=(p-1)p^{n-1-a}-(t+1)+(t+1)/p$, so $\lceil d/p^a\rceil=(p-1)p^{n-1-a}-t$; and for $a<i\le n-1$, $\lceil d/p^i\rceil=(p-1)p^{n-1-i}$, since $(p-1)(t+1)/p^{i-a+1}<1$. Hence $\sum_{i=0}^{n-1}\lceil d/p^i\rceil=(p^n-1)-(t+1)(p^a-1)-t=p^n-(t+1)p^a$. The case $a=0$ is \cite[Theorem 4.1]{HKN19} and the case $t=p-1$ is contained in \cite{Hu24}.
\end{proof}

\begin{theorem}[case (S), $B\ne\emptyset$]\label{thm:complement-S}
Let $I=[m]_{p-1}\cup(\bar B)_{p-1}$, $1\le b\le n-m$, with the saturated notion, and $D=\F_p^n\setminus I(\GP)$. Then $\cC_D$ is an $[p^n-p^m-p^b+1,\,n]$ code with at most six nonzero weights, given in Table~\ref{tab:complement-S}, where $c_1=p^{n-m-b}$ and $c_0=p^{n-m}-p^{n-m-b}$.
\end{theorem}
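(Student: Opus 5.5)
Combining Lemma~\ref{lem:weight} with the (S) case of Lemma~\ref{lem:charsum} gives the weights directly. The length is $|D|=p^n-|I(\GP)|=p^n-p^m-p^b+1$ by Lemma~\ref{lem:ideal-sets}(2). For $u=(v,w)\ne0$ we sum the character sum of Lemma~\ref{lem:charsum}(S) over $y\in\F_p^*$. This gives
\[
\sum_{y\in\F_p^*}\cH_{I(\GP)}(\zeta^{u_1y},\dots,\zeta^{u_ny})=(p-1)p^m[v=0]+(p^b\delta-1)\sum_{y\in\F_p^*}\zeta^{-y\sigma},
\]
and the last sum equals $p-1$ if $\sigma=0$ and $-1$ if $\sigma\ne0$. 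Hence
\[
\wt(c_D(u))=\frac{p-1}{p}|D|+\frac1p\Big((p-1)p^m[v=0]+(p^b\delta-1)\big(p[\sigma=0]-1\big)\Big).
\]
The weight therefore depends only on the three indicators $[v=0]$, $[\sigma=0]$ and $\delta=[w_{\bar B}=0]$. Since $v=0$ forces $\sigma=0$, this yields at most the cases $(v=0,\delta)$, $(v\ne0,\sigma=0,\delta)$ and $(\sigma\ne0,\delta)$ with $\delta\in\{0,1\}$. That is six classes, excluding the one vector $u=0$ inside the class $v=0,\ \delta=1$. This explains the ``at most six'' in the statement.

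The next step is to count the frequency of each class. The conditions on $v$ and on $w$ are independent, so each frequency is a product of a $v$-count and a $w$-count. For $w\in\F_p^{n-m}$, the number with $w_{\bar B}=0$ is $p^{n-m-b}=c_1$, and the number with $w_{\bar B}\ne0$ is $p^{n-m}-p^{n-m-b}=c_0$. These are exactly the constants of the statement. For $v\in\F_p^m$, there is $1$ vector with $v=0$, there are $p^{m-1}-1$ nonzero vectors with $\sigma=0$, and there are $p^m-p^{m-1}$ vectors with $\sigma\ne0$. The frequencies are then:
\begin{itemize}
\item $c_1-1$ and $c_0$ for $v=0$;
\item $(p^{m-1}-1)c_1$ and $(p^{m-1}-1)c_0$ for the second class;
\item $(p-1)p^{m-1}c_1$ and $(p-1)p^{m-1}c_0$ for the third.
\end{itemize}
Evaluating the weight formula in each class gives the six entries of Table~\ref{tab:complement-S}. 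For instance, for $v=0,\ \delta=1$ the weight is $\frac{p-1}{p}(p^n-p^m-p^b+1)+\frac{(p-1)(p^m+p^b-1)}{p}=(p-1)p^{n-1}=Q$.

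Finally we check that the dimension is $n$, i.e. that $u\mapsto c_D(u)$ is injective. It suffices to show that no nonzero $u$ has weight $0$, or equivalently that $D$ spans $\F_p^n$. This is clear because $D$ contains all but $p^m+p^b-1<p^n$ vectors and misses no full hyperplane's complement. One can also read it off from the formulas, since every weight is positive.

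Degenerate parameters need separate handling. When $b=n-m$ we have $c_1=1$, so the class $v=0,\ \delta=1$ is empty apart from $u=0$. When $m=1$, $p^{m-1}-1=0$ and the second class is empty. These are the only places where the count of distinct weights drops. The main obstacle is not conceptual: it is carefully simplifying the six weight expressions into clean closed forms matching the table, and noting which classes may be empty or may coincide.
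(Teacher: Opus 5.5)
Your argument is correct and follows the paper's proof: sum the (S) character sum of Lemma~\ref{lem:charsum} over $y\in\F_p^*$, insert it into Lemma~\ref{lem:weight}, and count $v$ and $w$ independently. Two side remarks need fixing, though neither affects the statement. First, the number of distinct weights also drops when $b=m$ (rows two and three of Table~\ref{tab:complement-S} coincide) and when $b=1$ (rows four and five coincide), so $b=n-m$ and $m=1$ are not the only cases. Second, for the dimension the bound that matters is $p^m+p^b-1<(p-1)p^{n-1}$, the size of a hyperplane complement, not $p^m+p^b-1<p^n$; your alternative argument, that all six weights are positive, is valid.
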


\begin{table}[h]
\centering
\begin{tabular}{lll}
\toprule
condition on $u=(v,w)$ & weight & frequency\\
\midrule
$v=0,\ \delta=1$ & $Q$ & $c_1-1$\\
$v=0,\ \delta=0$ & $(p-1)(p^{n-1}-p^{b-1})$ & $c_0$\\
$v\ne0,\ \sigma=0,\ \delta=1$ & $(p-1)(p^{n-1}-p^{m-1})$ & $(p^{m-1}-1)c_1$\\
$v\ne0,\ \sigma=0,\ \delta=0$ & $(p-1)(p^{n-1}-p^{m-1}-p^{b-1})$ & $(p^{m-1}-1)c_0$\\
$\sigma\ne0,\ \delta=1$ & $(p-1)(p^{n-1}-p^{m-1})-p^b+1$ & $(p^m-p^{m-1})c_1$\\
$\sigma\ne0,\ \delta=0$ & $(p-1)(p^{n-1}-p^{m-1}-p^{b-1})+1$ & $(p^m-p^{m-1})c_0$\\
\bottomrule
\end{tabular}
\caption{Weight distribution in Theorem~\ref{thm:complement-S}.}
\label{tab:complement-S}
\end{table}

\begin{proof}
By Lemma~\ref{lem:charsum}, $\sum_{y\ne0}\zeta^{-y\sigma}=p[\sigma=0]-1$, so Lemma~\ref{lem:weight} gives
$\wt(c_D(u))=\frac{p-1}{p}|D|+\frac1p\big((p-1)p^m[v=0]+(p[\sigma=0]-1)(p^b\delta-1)\big)$. The six cases follow; frequencies count $v$ and $w$ separately.
\end{proof}

The restriction to multiplicities in $\{0,p-1\}$ can be dropped on a single coordinate at no cost, since a one-coordinate box is just a set of $t$ points. This gives the following family, which contains the case $b=1$ of Theorem~\ref{thm:complement-S} as $t=p-1$.

\begin{theorem}\label{cor:griesmer}
Let $2\le m\le n-1$, $n\ge3$, $1\le t\le p-1$, and let $I=[m]_{p-1}\cup\{(m+1)^{(t)}\}$ be the saturated generalized order ideal of $\GH(m,n)$ in which the single upper element $m+1$ has multiplicity $t$, so that $I(\GP)=(\F_p^m\times0)\cup\{(-1,\dots,-1,j,0,\dots,0):1\le j\le t\}$ and $D=\F_p^n\setminus I(\GP)$. Then $\cC_D$ is a Griesmer code with parameters
\[
\big[p^n-p^m-t,\ n,\ (p-1)(p^{n-1}-p^{m-1})-t\big]_p .
\]
Writing $d$ for the minimum distance and $c=p^{n-m-1}$, its nonzero weights and frequencies are
\[
\begin{array}{ll}
d & (p-1)c\,\big((p-t+1)p^{m-1}-1\big)\\
d+1 & (p-1)c\,t\,p^{m-1}\\
(p-1)(p^{n-1}-p^{m-1}) & (p^{m-1}-1)c\\
(p-1)p^{n-1}-t & (p-1)c\\
(p-1)p^{n-1} & c-1
\end{array}
\]
For $t\ge2$ these are five distinct weights if $m\le n-2$ and four if $m=n-1$; for $t=1$ the second and third rows coincide, giving four and three distinct weights respectively. In all cases $\cC_D$ is minimal if and only if $m\le n-2$.
\end{theorem}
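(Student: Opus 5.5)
We will follow the pattern of Theorem~\ref{thm:complement-S}: compute every weight from Lemma~\ref{lem:weight}, check the Griesmer bound directly, and decide minimality with Lemma~\ref{lem:geom}. Write $u=(v,w)$ and let $w_1$ be the coordinate of $u$ at position $m+1$. The character sum over $I(\GP)$ splits into two parts. The first is $p^m[v=0]$ from $\F_p^m\times0$. The second is $\sum_{j=1}^t\zeta^{y(jw_1-\sigma)}$ from the $t$ extra points, since $v\cdot(-1,\dots,-1)=-\sigma$. Summing over $y\in\F_p^*$ gives $(p-1)p^m[v=0]+pN-t$, where
\[
N=\#\{1\le j\le t:\ jw_1=\sigma\}.
\]
Lemma~\ref{lem:weight} with $|D|=p^n-p^m-t$ then yields the single formula
\[
\wt(c_D(u))=(p-1)(p^{n-1}-p^{m-1})-t+(p-1)p^{m-1}[v=0]+N .
\]
We then sort $u$ into cases. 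If $w_1=0$, then $N=t$ when $\sigma=0$ and $N=0$ otherwise. If $w_1\ne0$, then $N\in\{0,1\}$, and $N=1$ exactly when $\sigma/w_1\in\{1,\dots,t\}$. Combining these with $v=0$ or $v\ne0$ produces the five rows of the table. The frequencies come from counting independently the free coordinates $m+2,\dots,n$ (the factor $c=p^{n-m-1}$), the value of $w_1$, and the $v$ with a prescribed $\sigma$ ($p^{m-1}$ of them for each value of $\sigma$). As a consistency check, the row $v=0$, $w_1\ne0$ has $\sigma=0$ and hence weight $(p-1)p^{n-1}-t$. The coincidences described in the statement are then read off: for $t=1$ the value $d+1$ equals $(p-1)(p^{n-1}-p^{m-1})$, and for $m=n-1$ the row of frequency $c-1$ is empty.

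For the Griesmer bound we mimic the proof of Proposition~\ref{prop:antichain-t}. For $i\le m-1$, $d/p^i$ equals an integer minus $t/p^i$, so its ceiling is that integer minus $\lfloor t/p^i\rfloor$; this is $0$ for $i\ge1$ and $t$ for $i=0$. For $m\le i\le n-1$ the ceiling is $(p-1)p^{n-1-i}$, because the deficit $(p-1)p^{m-1}/p^i+t/p^i$ is less than $1$. Summing gives $p^n-1-(p^m-1)-t=p^n-p^m-t$, which equals the length. The case $i=m$ needs the inequality $(p-1)/p+t/p^m<1$, and this is where $m\ge2$ is used: it gives $t<p\le p^{m-1}$.

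For minimality we first note that $D$ spans $\F_p^n$, since it contains every vector with a nonzero coordinate beyond $m+1$, together with $e_{m+1}$ when $t<p-1$ and otherwise suitable vectors. By Lemma~\ref{lem:geom}, $\cC_D$ is minimal if and only if $D\cap u^\perp$ spans $u^\perp$ for every nonzero $u$. We argue by cardinality. A subset of $u^\perp$ that does not span $u^\perp$ lies in a subspace of size $p^{n-2}$. On the other hand,
\[
|D\cap u^\perp|\ge p^{n-1}-p^m-t .
\]
If $m\le n-2$, then $(p-1)p^{n-2}\ge(p-1)p^m\ge 2p^m>p^m+t$, so $|D\cap u^\perp|>p^{n-2}$ and the code is minimal. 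If $m=n-1$, take $u=e_n$. Then $u^\perp=\F_p^m\times0\subseteq I(\GP)$, so $c_D(e_n)$ has full support. Any codeword of weight $d$, which is not a multiple of $c_D(e_n)$, then has support inside it, so the code is not minimal.

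The main obstacle is the bookkeeping in the case split, namely matching the frequencies exactly and confirming the coincidences between weights. The minimality argument is short once the crude bound above is checked to be strict.
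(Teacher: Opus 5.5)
Your proposal is correct. The weight computation (your $N$ is the paper's $X$) and the Griesmer verification follow the paper's proof. Your single bound $(p-1)/p+t/p^m<1$ for all $m\le i\le n-1$ also covers $i=n-1$, which the paper treats separately.

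The genuine difference is in the minimality direction for $m\le n-2$:
\begin{itemize}
\item The paper reads minimality off the weight table via the Ashikhmin--Barg condition, checking $p\,d-(p-1)^2p^{n-1}=(p-1)(p^{n-1}-p^m)-pt>0$.
\item You use Lemma~\ref{lem:geom} together with the count $|D\cap u^\perp|\ge p^{n-1}-p^m-t>p^{n-2}$. This uses nothing about the weight distribution beyond $|I(\GP)|=p^m+t$. It is the same cardinality mechanism the paper later uses in \eqref{eq:size} for Theorem~\ref{thm:main}.
\end{itemize}
In exchange, the paper's route proves something stronger: these complement codes lie inside the Ashikhmin--Barg range. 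That is the point of contrast with the codes $\cC_f$ of Section~\ref{sec:Cf}. For $m=n-1$ both arguments use the same full-weight codeword $c_D(e_n)$, which is the fourth row of the table.

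One small inaccuracy: $e_{m+1}$ lies in $D$ for every $t$, not only for $t<p-1$, because its first $m$ coordinates are $0\ne-1$. This does not affect the argument. Spanning is only needed when $m\le n-2$, and then the vectors with a nonzero coordinate beyond position $m+1$ already span $\F_p^n$. It also follows directly from all listed weights being positive.
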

\begin{proof}
For $u=(v,w_1,w')\ne0$ with $\sigma=\sum_{i\le m}v_i$, the character sum over $I(\GP)$ is $p^m[v=0]+\sum_{j=1}^t\zeta^{-y(\sigma-jw_1)}$, and summing over $y\in\F_p^*$ gives $(p-1)p^m[v=0]+\sum_{j=1}^t(p[\sigma=jw_1]-1)$. If $w_1=0$ the last sum is $t(p[\sigma=0]-1)$; if $w_1\ne0$ it is $p[\sigma\ne0,\ \sigma/w_1\in\{1,\dots,t\}]-t$. Lemma~\ref{lem:weight} with $|D|=p^n-p^m-t$ gives
\[
\wt(c_D(u))=W_0+(p-1)p^{m-1}[v=0]+X,\qquad W_0=(p-1)(p^{n-1}-p^{m-1})-t,
\]
where $X=t[\sigma=0]$ if $w_1=0$ and $X=[\sigma\ne0,\ \sigma/w_1\in\{1,\dots,t\}]$ if $w_1\ne0$. Enumerating $v=0$; $v\ne0,\sigma=0$; $\sigma\ne0$, each with $w_1=0$ or $w_1\ne0$, and noting that for fixed $\sigma\ne0$ exactly $t$ values of $w_1\ne0$ satisfy $\sigma/w_1\in\{1,\dots,t\}$, yields the table; $w'$ ranges over $\F_p^{n-m-1}$, contributing the factor $c$, and $u=0$ is removed from the first row.

For the Griesmer bound, $d=(p-1)p^{n-1}-(p-1)p^{m-1}-t$. For $1\le i\le m-1$, $\lceil d/p^i\rceil=(p-1)(p^{n-1-i}-p^{m-1-i})$ since $0<t/p^i<1$; for $m\le i\le n-2$, $\lceil d/p^i\rceil=(p-1)p^{n-1-i}$ since $0<((p-1)p^{m-1}+t)/p^i\le (p-1)/p+(p-1)/p^m<1$ by $m\ge2$; and $\lceil d/p^{n-1}\rceil=p-1$ since $(p-1)p^{m-1}+t\le(p-1)(p^{n-2}+1)<p^{n-1}$ for $n\ge3$. Hence $\sum_{i=0}^{n-1}\lceil d/p^i\rceil=(p^n-1)-(p^m-1)-t=|D|$.

If $m\le n-2$, then $p\,d-(p-1)(p-1)p^{n-1}=(p-1)(p^{n-1}-p^m)-pt\ge(p-1)^2p^{n-2}-pt>0$, so the Ashikhmin--Barg condition holds and $\cC_D$ is minimal. If $m=n-1$, the fourth row has weight $(p-1)p^{n-1}-t=|D|$, so those codewords have full weight and $\cC_D$ is not minimal.
\end{proof}

\begin{remark}[geometric description]\label{rem:belov}
Let $\Pi$ be the projective $(m-1)$-space $\{x''=0\}$ and let $L$ be the projective line through $\langle(1,\dots,1,0,\dots,0)\rangle\in\Pi$ and $\langle e_{m+1}\rangle$. Regarded as a multiset of points of $PG(n-1,p)$, the defining set $D$ of Theorem~\ref{cor:griesmer} consists of every point outside $\Pi$ with multiplicity $p-1$, except $t$ of the $p-1$ points of $L\setminus\Pi$, which have multiplicity $p-2$; the points of $\Pi$ do not occur. Indeed, of the $p-1$ representatives $\lambda(-1,\dots,-1,c)$ of a point of $L\setminus\Pi$, only $\lambda=1$ lies in $I(\GP)$. Thus $\cC_D$ is the Belov-type (anticode) Griesmer code obtained from the $(p-1)$-fold simplex code by deleting $p-1$ copies of an $(m-1)$-dimensional projective subspace and $t$ points of a line meeting it, in the same way that the codes of Hyun--Kim--Na coincide in parameters with Belov-type codes. The novelty is only that this multiset arises from a poset; the code is non-projective since its columns are repeated.
\end{remark}

\begin{proposition}\label{prop:complement-minimal}
In Theorem~\ref{thm:complement-S} with $2\le m\le n-2$ and $b\le n-m-1$, $\cC_D$ satisfies the Ashikhmin--Barg condition and is therefore minimal.
\end{proposition}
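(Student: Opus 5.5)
The plan is to read $w_{\min}$ and $w_{\max}$ off Table~\ref{tab:complement-S} and then check the inequality $p\,w_{\min}>(p-1)w_{\max}$ of Lemma~\ref{lem:AB} directly. Minimality then follows from that lemma.

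First I will check which rows of the table actually occur under the hypotheses.
\begin{itemize}
\item Since $b\le n-m-1$, we have $c_1=p^{n-m-b}\ge p$, so $c_1-1>0$ and the first row, of weight $Q=(p-1)p^{n-1}$, occurs.
\item Since $m\ge2$, we have $p^{m-1}-1>0$, and $c_0>0$ always, so every other row occurs as well.
\end{itemize}
Every weight in the table is visibly at most $Q$, so $w_{\max}=Q$.

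For $w_{\min}$ I will compare the candidate rows pairwise.
\begin{itemize}
\item Row 6 exceeds row 4 by exactly $1$.
\item Row 2 is at least row 4, and row 3 is at least row 5.
\item Row 5 minus row 4 equals $(p-1)p^{b-1}-p^b+1=1-p^{b-1}\le0$.
\end{itemize}
Hence
\[
w_{\min}=(p-1)(p^{n-1}-p^{m-1})-p^b+1 .
\]

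It remains to verify $p\,w_{\min}>(p-1)Q$. After dividing out, this is equivalent to
\[
(p-1)p^{n-1}-(p-1)p^{m}-p^{b+1}+p>0 .
\]
Using $m\le n-2$ gives $(p-1)p^m\le(p-1)p^{n-2}$. Using $b\le n-m-1\le n-3$ gives $p^{b+1}\le p^{n-2}$. Therefore the left-hand side is at least
\[
(p-1)p^{n-1}-p\cdot p^{n-2}+p=(p-2)p^{n-1}+p>0 .
\]
Lemma~\ref{lem:AB} then yields minimality.

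The only delicate step is identifying the true minimum weight. Row 5 lies below row 4 only because the saturated part contributes $-p^b+1$ rather than $-(p-1)p^{b-1}$. The presence of the weight $Q$ also relies on $b\le n-m-1$. Both points need the explicit comparison above; the rest is routine.
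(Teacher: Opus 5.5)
Your proposal is correct and follows essentially the same route as the paper. Both arguments take $w_{\max}=Q$ from the first row, which occurs because $b\le n-m-1$, and $w_{\min}$ from the fifth row, then reduce the Ashikhmin--Barg inequality to $(p-1)(p^{n-1}-p^m)-p^{b+1}+p>0$ using $m\le n-2$ and $p^{b+1}\le p^{n-2}$. Your pairwise comparison of the rows is a bit more complete than the paper's, but the argument is the same.
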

\begin{proof}
Since $b\le n-m-1$, the first row of Table~\ref{tab:complement-S} has frequency $c_1-1\ge p-1>0$, so $w_{\max}=Q$. Comparing the remaining rows, the smallest weight is that of the fifth row, $w_{\min}=Q-(p-1)p^{m-1}-p^b+1$: it is below the fourth row by $p^{b-1}-1\ge0$ and below the second row by $(p-1)p^{m-1}+p^{b-1}-1>0$. Then
\[
p\,w_{\min}-(p-1)w_{\max}=(p-1)(p^{n-1}-p^m)-p^{b+1}+p>0,
\]
because $m\le n-2$ gives $(p-1)(p^{n-1}-p^m)\ge(p-1)^2p^{n-2}$ and $b+1\le n-m\le n-2$ gives $p^{b+1}\le p^{n-2}$.
\end{proof}

\begin{theorem}[case (U), $B\ne\emptyset$]\label{thm:complement-U}
With the unsaturated notion and $(m,b)\ne(1,n-1)$, $|D|=p^n-p^m-(p-1)^m(p^b-1)$, $\dim\cC_D=n$, and, for $u\ne0$ with $T_k=(-1)^k(p-1)^{m-k}$,
\[
\wt(c_D(u))=\frac{p-1}{p}\Big(|D|+p^m[k=0]+T_k(p^b\delta-1)\Big),
\]
with frequency $\binom mk(p-1)^k c_\delta$ (minus one for $(k,\delta)=(0,1)$). The code has at most $2(m+1)$ weights; for $m=1$ and $b\le n-2$ it is a three-weight code of length $p^n-(p-1)p^b-1$ with weights $(p-1)p^{n-1}-(p-1)^2p^{b-1}$, $(p-1)(p^{n-1}-p^b)$, $(p-1)p^{n-1}$. In the excluded case $(m,b)=(1,n-1)$ one has $D=\{0\}\times(\F_p^{n-1}\setminus0)$, the map $u\mapsto c_D(u)$ has kernel $\F_p e_1$, and $\cC_D$ is the one-weight $[p^{n-1}-1,\,n-1,\,(p-1)p^{n-2}]$ code.
\end{theorem}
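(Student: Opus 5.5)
The plan is to apply Lemma~\ref{lem:weight} directly, with the character sum of Lemma~\ref{lem:charsum}(U). The key observation is that for $y\in\F_p^*$ the vector $yu=(yv,yw)$ has the same $k=\wt(v)$ and the same $\delta=[w_{\bar B}=0]$ as $u$. So the (U) character sum
\[
p^m[v=0]+T_k(p^b\delta-1)
\]
does not depend on $y$, and summing over $y\in\F_p^*$ just multiplies it by $p-1$. Substituting into Lemma~\ref{lem:weight} gives the displayed formula immediately. The value of $|D|$ is $p^n-|I(\GP)|$, read off from Lemma~\ref{lem:ideal-sets}(2).

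For the frequencies, I would count $v$ and $w$ separately, as in Theorem~\ref{thm:complement-S}. There are $\binom mk(p-1)^k$ vectors $v\in\F_p^m$ of weight $k$. There are $c_1=p^{n-m-b}$ vectors $w$ with $w_{\bar B}=0$ and $c_0=p^{n-m}-c_1$ with $w_{\bar B}\ne0$, and $u=0$ is removed from the class $(k,\delta)=(0,1)$. Since the weight depends only on the pair $(k,\delta)\in\{0,\dots,m\}\times\{0,1\}$, there are at most $2(m+1)$ weights.

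Next I would show that $\dim\cC_D=n$, equivalently that $D$ spans $\F_p^n$. A point $x=(x',x'')$ lies outside $I(\GP)$ exactly when $x''\ne0$ and either $x'$ has a zero coordinate or $x''$ is not supported on $\bar B$. Hence $(0,e_j)\in D$ for every $j>m$.
\begin{itemize}
\item If $m\ge2$: for $i\le m$, the vector $(e_i,e_j)$ lies in $D$ because $e_i$ has a zero coordinate. So every $e_i$ with $i\le m$ is also in the span.
\item If $m=1$ and $b<n-1$: pick $j\in V\setminus\bar B$. Then $(1,e_j)\in D$, so $e_1$ is in the span.
\end{itemize}
In the excluded case $m=1$, $b=n-1$, every $x''\ne0$ is supported on $\bar B$. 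So $x\in D$ forces $x_1=0$, giving $D=\{0\}\times(\F_p^{n-1}\setminus0)$. Then $c_D(u)$ ignores $u_1$, the kernel of $u\mapsto c_D(u)$ is $\F_p e_1$, and the image is the $(p-1)$-fold simplex code of dimension $n-1$. That code is one-weight with weight $(p-1)p^{n-2}$.

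Finally I would specialize to $m=1$. There $|D|=p^n-(p-1)p^b-1$ and $T_0=p-1$, $T_1=-1$. The four classes give:
\begin{itemize}
\item $(0,1)$: $|D|+p+(p-1)(p^b-1)=p^n$, so the weight is $(p-1)p^{n-1}$.
\item $(0,0)$: $|D|+p-(p-1)=p^n-(p-1)p^b$.
\item $(1,0)$: $|D|+1$, which is the same value $p^n-(p-1)p^b$.
\item $(1,1)$: $|D|-(p^b-1)=p^n-p^{b+1}$.
\end{itemize}
Multiplying by $(p-1)/p$ yields the three stated weights. The assumption $b\le n-2$ gives $c_1-1\ge p-1>0$, so the weight $(p-1)p^{n-1}$ really occurs, and $c_0>0$ together with $p\ge3$ makes the three values distinct.

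I expect no step to be a real obstacle. The only place needing care is the spanning argument, in particular separating $m\ge2$ from $m=1$ to see exactly why $(1,n-1)$ is the unique exceptional case.
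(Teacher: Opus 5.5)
Your proposal is correct and matches the paper's implicit argument: the paper states this theorem without a separate proof, obtaining it as you do by inserting the $y$-independent (U) character sum of Lemma~\ref{lem:charsum} into Lemma~\ref{lem:weight} and counting $v$ and $w$ separately, as in Theorem~\ref{thm:complement-S}. Your spanning argument for $\dim\cC_D=n$ is the same one the paper uses for the set $N$ in the proof of Theorem~\ref{thm:U}, and your appeal to $p\ge3$ for the distinctness of the three weights when $m=1$ is unnecessary but harmless, since $(p-1)^2p^{b-1}\ne(p-1)p^b$ for every $p$.
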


\begin{remark}
Two incomparable ideals $I_j=[m]_{p-1}\cup(\bar B_j)_{p-1}$ can be treated by \eqref{eq:incl-excl} with $I_1\cap I_2=[m]_{p-1}\cup(\bar B_1\cap\bar B_2)_{p-1}$; the computation is identical. Computer search ($p=3$, $n\le4$) produced only minimal codes satisfying the Ashikhmin--Barg condition from the complement construction.
\end{remark}

\section{Minimal codes from characteristic functions of generalized order ideals}\label{sec:Cf}

Let $f:\F_p^n\to\F_p$ be the characteristic function of $I(\GP)\setminus\{0\}$ and
\begin{equation}\label{eq:Cf}
\cC_f=\{c(s,u)=(s f(x)+u\cdot x)_{x\in\F_p^n\setminus0}:\ s\in\F_p,\ u\in\F_p^n\}.
\end{equation}
This is the code $\cC_{D'}$ of \eqref{eq:CD} with $D'=\{(f(x),x):x\ne0\}\subseteq\F_p^{n+1}$; it is the $p$-ary analogue of construction (5.5) of Hyun--Kim--Wu--Yue and an instance of the characteristic-function construction of Mesnager--Qi--Ru--Tang.

\begin{lemma}\label{lem:Cf-weight}
Let $I=[m]_{p-1}\cup(\bar B)_{p-1}$, $B\ne\emptyset$. Then $\wt(c(0,u))=Q$ for $u\ne0$, $\wt(c(s,0))=|I(\GP)|-1$ for $s\ne0$, and for $s\ne0$, $u\ne0$,
\[
\wt(c(s,u))=Q-1+p^m[v=0]+\Phi,\qquad
\Phi=\begin{cases}(-1)^k(p-1)^{m-k}(p^b\delta-1)&\text{(U)},\\[2pt](p^b\delta-1)\big([\sigma=0]-[\sigma=s]\big)&\text{(S)}.\end{cases}
\]
\end{lemma}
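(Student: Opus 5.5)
We compute $\wt(c(s,u))$ with the usual character-sum identity. The code has length $p^n-1$ and coordinates indexed by $x\ne0$, so
\[
\wt(c(s,u))=(p^n-1)-\frac1p\sum_{y\in\F_p}\sum_{x\ne0}\zeta^{y(sf(x)+u\cdot x)}.
\]
The $y=0$ term contributes $(p^n-1)/p$. For $y\ne0$ we adjoin $x=0$, where $f(0)=0$ and the summand equals $1$. We then split $\F_p^n$ into $I(\GP)$, where $f=1$, and its complement, where $f=0$. This gives
\[
\sum_{x\ne0}\zeta^{y(sf(x)+u\cdot x)}=\sum_{x\in\F_p^n}\zeta^{y\,u\cdot x}+(\zeta^{ys}-1)\sum_{x\in I(\GP)}\zeta^{y\,u\cdot x}-1 .
\]
(Here $x=0$ lies in $I(\GP)$ but $f(0)=0$; this is handled by writing $f$ as the indicator of $I(\GP)$ minus $[x=0]$. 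The correction term is $(\zeta^{ys}-1)\cdot 1$ at $x=0$, which must be subtracted.)

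The special cases are immediate. For $s=0$ and $u\ne0$ the code is the $(p-1)$-fold simplex code, with weight $Q$. For $u=0$ and $s\ne0$ the codeword is $s\cdot f$, whose weight is $|I(\GP)\setminus\{0\}|=|I(\GP)|-1$.

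Now take $s\ne0$ and $u\ne0$. The full sum $\sum_{x\in\F_p^n}\zeta^{y\,u\cdot x}$ vanishes. Summing over $y\ne0$ leaves the quantity $\sum_{y\ne0}\big[(\zeta^{ys}-1)(S_y-1)-1\big]$, where $S_y=\sum_{x\in I(\GP)}\zeta^{y\,u\cdot x}$ is given by Lemma~\ref{lem:charsum}.

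We substitute Lemma~\ref{lem:charsum} in two steps. First, the term $p^m[v=0]$ in $S_y$ is independent of $y$. It gets multiplied by $\sum_{y\ne0}(\zeta^{ys}-1)=-1-(p-1)=-p$, and after the factor $-\frac1p$ it contributes $+p^m[v=0]$. The same computation applied to the $-1$ inside $(S_y-1)$ gives $-1$. The remaining $-1$ terms give $-(p-1)$, which combines with $(p^n-1)-(p^n-1)/p$ to produce $Q$ up to a constant. Collecting these constants carefully should produce $Q-1$, and this bookkeeping is the one spot to double-check.

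Second, for the $y$-dependent part:
\begin{itemize}
\item In case (U) the factor $T_k(p^b\delta-1)$ is independent of $y$. It is again multiplied by $-p$, and after the factor $-\frac1p$ it yields $\Phi=T_k(p^b\delta-1)$.
\item In case (S) we need $\sum_{y\ne0}(\zeta^{ys}-1)\zeta^{-y\sigma}=\sum_{y\ne0}\zeta^{y(s-\sigma)}-\sum_{y\ne0}\zeta^{-y\sigma}=(p[\sigma=s]-1)-(p[\sigma=0]-1)=p([\sigma=s]-[\sigma=0])$. After multiplying by $-\frac1p(p^b\delta-1)$ this gives $\Phi=(p^b\delta-1)([\sigma=0]-[\sigma=s])$.
\end{itemize}

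The main obstacle is purely the constant bookkeeping around the excluded point $x=0$ and the $y=0$ term. As a sanity check, we would verify that the constant equals $Q-1$ by testing a small case, for instance $v=0$ with $\delta=1$, against a direct count of the support.
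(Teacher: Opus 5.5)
Your proof is correct and follows essentially the paper's route: character orthogonality plus Lemma~\ref{lem:charsum}, with the same key evaluation $\sum_{y\ne0}(\zeta^{ys}-1)\zeta^{-y\sigma}=p([\sigma=s]-[\sigma=0])$ in case (S). The one difference is that the paper avoids the $x=0$ bookkeeping by first counting $\wt(c(s,u))=p^n-1-N_{\mathrm{all}}(0)+N_I(0)-N_I(-s)$ combinatorially, where $N_I(c)$ is the number of $x\in I(\GP)$ with $u\cdot x=c$.

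Your first displayed identity omits the $-(\zeta^{ys}-1)$ correction that you mention right afterwards. However, the expression $\sum_{y\ne0}[(\zeta^{ys}-1)(S_y-1)-1]$ that you actually use is right, and your listed constants combine to exactly $(p^n-1)\tfrac{p-1}{p}+\tfrac{p-1}{p}-1=Q-1$, so that step needs no further checking.
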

\begin{proof}
For $s\ne0$, $\wt(c(s,u))=p^n-1-N_{\mathrm{all}}(0)+N_I(0)-N_I(-s)$, where $N_I(c)=|\{x\in I(\GP):u\cdot x=c\}|$ and $N_{\mathrm{all}}(0)=|\{x:u\cdot x=0\}|$. Since $N_I(0)-N_I(-s)=\frac1p\sum_{y\ne0}(1-\zeta^{ys})\sum_{x\in I(\GP)}\zeta^{y\,u\cdot x}$, Lemma~\ref{lem:charsum} gives the result; in case (S) one uses $\frac1p\sum_{y\ne0}(1-\zeta^{ys})\zeta^{-y\sigma}=[\sigma=0]-[\sigma=s]$.
\end{proof}

\begin{proposition}[necessary condition]\label{prop:necessary}
If $\bar A\cup\bar B\ne[n]$, then $\cC_f$ is not minimal.
\end{proposition}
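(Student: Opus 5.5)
The plan is to exhibit directly two codewords of $\cC_f$ whose supports are nested but which are not proportional. This avoids Lemma~\ref{lem:geom} altogether.

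Fix an index $j\in[n]\setminus(\bar A\cup\bar B)$, which exists by hypothesis. The first step is to observe that every element of $I(\GP)$ is supported on $\bar A\cup\bar B$. For $B\ne\emptyset$ this is read off from the explicit descriptions in Lemma~\ref{lem:ideal-sets}(2), and for $B=\emptyset$ from part (1). Hence $x_j=0$ for all $x\in I(\GP)$, and so $f(x)=0$ whenever $x_j\ne0$.

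The second step compares $c(0,e_j)$ and $c(1,e_j)$ from \eqref{eq:Cf}.
\begin{itemize}
\item The support of $c(0,e_j)$ is $\{x\ne0: x_j\ne0\}$.
\item On such $x$ we have $f(x)=0$, so the coordinate of $c(1,e_j)$ at $x$ equals $f(x)+x_j=x_j\ne0$.
\item On $x\ne0$ with $x_j=0$, the coordinate of $c(1,e_j)$ is $f(x)$, which is $1$ exactly on $I(\GP)\setminus\{0\}$.
\end{itemize}
Therefore
\[
\supp(c(1,e_j))=\{x:x_j\ne0\}\cup\big(I(\GP)\setminus\{0\}\big)\supseteq\supp(c(0,e_j)).
\]

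The third step checks that $c(1,e_j)\notin\F_p\,c(1,e_j)$ is not the issue but rather that $c(0,e_j)\notin\F_p\,c(1,e_j)$. Pick some $x_0\in I(\GP)\setminus\{0\}$. This set is nonempty, since in the present setting $B\ne\emptyset$ (or, when $B=\emptyset$, the ideal $A$ is assumed nonempty). At $x_0$ the codeword $c(1,e_j)$ takes the value $1$ while $c(0,e_j)$ takes the value $0$. Both codewords are nonzero, because $e_j\ne0$ gives $\wt(c(0,e_j))=Q>0$. Consequently no scalar $\lambda$ satisfies $c(0,e_j)=\lambda\,c(1,e_j)$. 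This shows $c(1,e_j)$ is not a minimal codeword, so $\cC_f$ is not minimal.

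The only delicate point is the degenerate case $I(\GP)=\{0\}$, where $f\equiv0$ and the argument collapses. That case is excluded by the standing assumption of this section ($B\ne\emptyset$, or more generally a nonempty ideal), so I would state this explicitly. Otherwise the proof is a short support computation.
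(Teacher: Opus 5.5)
Your argument is correct and is the same as the paper's: for $j\notin\bar A\cup\bar B$ the function $f$ vanishes wherever $x_j\ne0$, so $\supp(c(1,e_j))=\supp(c(1,0))\sqcup\supp(c(0,e_j))$ and therefore $c(1,e_j)$ covers $c(0,e_j)$. Your explicit non-proportionality check at a point of $I(\GP)\setminus\{0\}$, and your note that the empty ideal (where $f\equiv0$) must be excluded, spell out what the paper's proof uses implicitly when it treats $c(1,0)$ as a nonzero codeword.
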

\begin{proof}
Pick $j\notin\bar A\cup\bar B$. Then $f(x)\ne0$ implies $x_j=0$, so $\supp(c(1,0))$ and $\supp(c(0,e_j))$ are disjoint and $c(1,e_j)$ covers $c(0,e_j)$.
\end{proof}

\begin{proposition}\label{prop:nonminimal}
Let $\bar A\cup\bar B=[n]$. If $m=1$, then $\cC_f$ is not minimal for either notion. If $m=n-1\ge2$ and the notion is saturated, then $\cC_f$ is not minimal.
\end{proposition}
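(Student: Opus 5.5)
For $m=1$ under the unsaturated notion I would argue by support containment, as in Proposition~\ref{prop:necessary}. For $m=n-1$ under the saturated notion, and for $m=1$ under the saturated notion, I would use Lemma~\ref{lem:geom} on $D'=\{(f(x),x):x\ne0\}\subseteq\F_p^{n+1}$. The hypothesis $\bar A\cup\bar B=[n]$ together with Lemma~\ref{lem:ideal-sets} makes $I(\GP)$ explicit, so I would first write down $\supp(c(s,u))$ directly and only then hunt for a covering pair.

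\emph{Case $m=1$, notion (U).} Here $\bar B=V=\{2,\dots,n\}$, and Lemma~\ref{lem:ideal-sets} gives $I(\GP)=(\F_p\times0)\cup(\F_p^*\times(\F_p^{n-1}\setminus0))$. This is the complement of $\{0\}\times(\F_p^{n-1}\setminus0)$. Consequently $f(x)=1$ for every nonzero $x$ with $x_1\ne0$. So $\supp(c(0,e_1))=\{x_1\ne0\}$ is contained in $\supp(c(1,0))=I(\GP)\setminus\{0\}$. Since $c(0,e_1)\notin\F_p c(1,0)$, the codeword $c(1,0)$ is not minimal.

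\emph{Saturated cases.} For $m=1$ the set is $I(\GP)=(\F_p\times0)\cup(\{-1\}\times(\F_p^{n-1}\setminus0))$. For $m=n-1$ with $b=1$ it is $I(\GP)=(\F_p^{n-1}\times0)\cup\{(-1,\dots,-1,j):j\ne0\}$. The plan is to choose $(s,u)=(s,(v,w))$ so that the zero set $Z=D'\cap(s,u)^\perp$ is forced into a proper subspace of the $n$-dimensional hyperplane $(s,u)^\perp$. Using the description of $f$, I would split $Z$ into the part with $x''=0$ (where $f=1$, so the condition is $v\cdot x'=-s$) and the part with $x''\ne0$. The latter splits further into $x'=(-1,\dots,-1)$ (where $f=1$, so the condition is $-\sigma+w\cdot x''=-s$) and the remaining $x$ (where $f=0$, so the condition is $v\cdot x'+w\cdot x''=0$). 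I would then pick parameters that make some of these pieces empty or degenerate.

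For $m=n-1$ the natural first choice is to let $w$ carry all the information: take $v=0$ and $w\ne0$. Then the first piece is empty for $s\ne0$. The middle piece is governed by $w\,x_n=-s$, which has the single solution $x_n=-s/w$. The last piece becomes $\{(x',x_n): x_n\ne0,\ w x_n=0\}$, which is empty. So $Z$ reduces to essentially one point, which does not span an $n$-dimensional hyperplane once $n\ge2$. Non-minimality follows from Lemma~\ref{lem:geom}, after checking that $D'$ spans $\F_p^{n+1}$, which is easy.

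For $m=1$ I would try analogous degenerate choices: $v=0$ with $s\ne0$ (or $s$ tuned to $w$), and alternatively $s=v$. In each case I would compute $Z$ and look for a linear relation among its points beyond $(s,u)\cdot z=0$. For example, I would test whether all points of $Z$ also satisfy $s'+x_1=0$ for a suitable $s'$.

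The main obstacle is the saturated case $m=1$. A quick check of Lemma~\ref{lem:Cf-weight} shows that no codeword has weight close to $p^n-1$, so a cardinality argument on $Z$ will not suffice. Instead, an explicit second hyperplane containing $Z$ has to be exhibited. If the degenerate choices above fail, I would fall back on a support-containment pair $c(s',u')$, $c(s,u)$ with $u'=\lambda u+\mu e_1$. I would determine the admissible $\lambda,\mu$ by comparing the value sets of $s f(x)+u\cdot x$ on the three pieces of $\F_p^n\setminus0$ identified above.
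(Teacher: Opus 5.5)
Your arguments for $m=1$ under (U) and for $m=n-1$ under (S) are correct. For $m=n-1$ you use the same codeword as the paper, $c(s,(0,w))$ (e.g.\ $c(1,e_n)$). You invoke Lemma~\ref{lem:geom} via its one-point zero set, where the paper instead names the covered codeword $c(0,e_1-e_2)$; both work.

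The gap is the saturated case $m=1$. You leave it open, and the candidates you name fail once $n\ge3$. Write points of $\F_p^{n+1}$ as $(t,x_1,x'')$.
\begin{itemize}
\item For $v=0$, $s\ne0$, $w\ne0$, the set $D'\cap(s,u)^\perp$ contains $(0,x_1,x'')$ with $x_1\in\{0,1\}$, $x''\ne0$, $w\cdot x''=0$. It also contains $(1,-1,x'')$ with $w\cdot x''=-s$. Together these span $(s,u)^\perp$.
\item For $s=v$, $w=0$, the points $(1,-1,x'')$ with $x''\in\F_p^{n-1}$, together with $(0,0,x'')$ with $x''\ne0$, span $\{t+x_1=0\}$. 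The same happens for $s=v$, $w\ne0$.
\end{itemize}
So every codeword you propose to test is in fact minimal. The fallback of determining $\lambda,\mu$ by comparing value sets is a plan, not a proof. As written, the proposal does not prove the statement in this case.

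The missing idea is to reverse the support containment you used in case (U); the reversed direction works for both notions at once. Every nonzero $x\in I(\GP)$ has $x_1\ne0$:
\begin{itemize}
\item if $x''=0$, because $x\ne0$;
\item if $x''\ne0$, because $x_1\in\F_p^*$ in (U) and $x_1=-1$ in (S).
\end{itemize}
Hence $\supp(c(1,0))=I(\GP)\setminus\{0\}\subseteq\{x:x_1\ne0\}=\supp(c(0,e_1))$. So $c(0,e_1)$ covers the linearly independent codeword $c(1,0)$ and is not minimal. This is the paper's proof.

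In the language of Lemma~\ref{lem:geom}, the right test vector is $(s,u)=(0,e_1)$, with $s=0$ rather than $s\ne0$. Indeed, $D'\cap(0,e_1)^\perp=\{(0,0,x''):x''\ne0\}$ lies in the $(n-1)$-dimensional subspace $\{t=0,\ x_1=0\}$.

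In (U) the two supports happen to coincide, which is why your direction worked there. In (S) only the paper's direction holds: for $x''\ne0$, the point $(1,x'')$ lies in $\supp(c(0,e_1))$ but not in $\supp(c(1,0))$, since $1\ne-1$ for odd $p$.
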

\begin{proof}
If $m=1$, every $x\in I(\GP)\setminus0$ has $x_1\ne0$ (for $x''=0$ because $x\ne0$, for $x''\ne0$ by the definition of $I(\GP)$), so $\supp(c(1,0))\subseteq\supp(c(0,e_1))$, and $c(1,0)$, $c(0,e_1)$ are linearly independent. If $m=n-1$ and the notion is saturated, then $c(1,e_n)$ vanishes at $x\ne0$ only if $x_n\ne0$ (otherwise $x\in\F_p^{n-1}\times0\subseteq I(\GP)$ and $c(1,e_n)_x=1$), hence only at $x=(-1,\dots,-1,x_n)$ with $1+x_n=0$; its unique zero is $(-1,\dots,-1)$, which also lies in $\supp(c(0,e_1-e_2))^c$. Thus $c(1,e_n)$ covers $c(0,e_1-e_2)$, and the two are linearly independent.
\end{proof}

\begin{theorem}\label{thm:main}
Let $p$ be an odd prime, $n\ge4$, $2\le m\le n-2$, $r=n-m$, and $I=[m]_{p-1}\cup V_{p-1}$ with the saturated notion. Then $\cC_f$ is a minimal $[p^n-1,\,n+1,\,p^m+p^r-2]$ code with at most seven nonzero weights (Table~\ref{tab:main}), and it violates the Ashikhmin--Barg condition.
\end{theorem}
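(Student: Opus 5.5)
The plan is to read the weights off Lemma~\ref{lem:Cf-weight} with $b=r$, so that $\delta=[w=0]$. Minimality will then follow from the geometric criterion, Lemma~\ref{lem:geom}, applied to $D'=\{(f(x),x):x\ne0\}\subseteq\F_p^{n+1}$.

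\textbf{Weights, length, dimension.} The length is $p^n-1$. The set $D'$ spans $\F_p^{n+1}$: it contains $(0,x)$ for every $x\notin I(\GP)$, and these span $\F_p^n$; it also contains $(1,x)$ and $(0,x')$ pairs that yield $e_0$. Hence $\dim\cC_f=n+1$. The weights come from Lemma~\ref{lem:Cf-weight}:
\begin{itemize}
\item $s=0$, $u\ne0$: weight $Q$.
\item $s\ne0$, $u=0$: weight $|I(\GP)|-1=p^m+p^r-2$.
\item $s\ne0$, $u\ne0$: weight $Q-1+p^m[v=0]+(p^r\delta-1)([\sigma=0]-[\sigma=s])$.
\end{itemize}
Splitting the last case by $v=0$; $v\ne0,\sigma=0$; $\sigma=s$; and $\sigma\notin\{0,s\}$, each with $\delta\in\{0,1\}$, gives the weights
\[
Q+p^m-2,\quad Q-2,\quad Q+p^r-2,\quad Q,\quad Q-p^r,\quad Q-1,
\]
together with $p^m+p^r-2$. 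Note that $v=0$ forces $w\ne0$, so $\delta=0$ there. The frequencies are counted exactly as in Theorem~\ref{thm:complement-S}. Since $m,r\ge2$ and $n\ge4$, the value $p^m+p^r-2\le p^{n-2}+p^2-2$ is smaller than $Q-p^r$, so it is $d$. The Ashikhmin--Barg condition then fails because $w_{\max}\ge Q+p^m-2$ while $p\,d<(p-1)Q$.

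\textbf{Minimality, counting part.} By Lemma~\ref{lem:geom}, I must show that for each nonzero $(s,u)$ the set $Z$ of points $(f(x),x)$ with $sf(x)+u\cdot x=0$ spans the $n$-dimensional hyperplane $H=(s,u)^\perp$. A proper subspace of $H$ has at most $p^{n-1}$ elements, while $|Z|=p^n-1-\wt(c(s,u))$. So whenever $\wt<Q$ we get $|Z|+1>p^{n-1}$ and $Z$ spans $H$. This disposes of all codewords of weight $Q-2$, $Q-1$, $Q-p^r$ and $p^m+p^r-2$.

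\textbf{Minimality, explicit part.} The remaining classes are:
\begin{enumerate}
\item[(a)] $s=0$;
\item[(b)] $s\ne0$, $v=0$;
\item[(c)] $s\ne0$, $v\ne0$, $\sigma=0$, $w=0$;
\item[(d)] $s\ne0$, $\sigma=s$, $w\ne0$.
\end{enumerate}
For these I will exhibit spanning sets directly.

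In (a), $H=\{(t,x):u\cdot x=0\}$. First I show that the points $x\in u^\perp\setminus I(\GP)$ span $u^\perp$. This holds because $I(\GP)$ has only $p^m+p^r-1$ elements, which is far below $p^{n-2}$ when $m,r\le n-2$, so the count above applies inside $u^\perp$. Then I need one nonzero $x\in I(\GP)\cap u^\perp$, giving $(1,x)$ and hence $e_0$. Such an $x$ exists since $\F_p^m\times0$ meets $u^\perp$ nontrivially whenever $m\ge2$.

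For (b)--(d) the approach is the same. Points with $f(x)=0$ lie in $H$ exactly when $u\cdot x=0$. Points $(1,x)$ with $x\in I(\GP)$ lie in $H$ when $u\cdot x=-s$. I span the $(n-1)$-dimensional space $\{(0,x):u\cdot x=0\}$ by complement points, again by cardinality. I then add a single $(1,x)$ with $x\in I(\GP)$ and $u\cdot x=-s$:
\begin{itemize}
\item in (b), choose $x=(-1,\dots,-1,x'')$ with $w\cdot x''=-s$;
\item in (c), choose $x\in\F_p^m\times0$ with $v\cdot x=-s$, possible since $v\ne0$;
\item in (d), note $(-1,\dots,-1,0)$ is excluded but $(-1,\dots,-1,x'')$ with $w\cdot x''=0$, $x''\ne0$ works because $r\ge2$.
\end{itemize}

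I expect the main obstacle to be the counting inside $u^\perp$ for the complement points, and in particular checking the edge case where $I(\GP)\cap u^\perp$ is large, for instance $u=(0,w)$. If the count is too tight there, I would fall back on explicit bases built from $\F_p^m\times(\F_p^r\setminus\{0\})$ points lying outside the single coset $\{-1\}^m$.
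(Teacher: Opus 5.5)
Your proof is correct once one inequality is fixed. It takes a different route from the paper's.

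\textbf{The inequality.} In (a), and again in (b)--(d), you argue that $u^\perp\setminus I(\GP)$ spans $u^\perp$ because $|I(\GP)|=p^m+p^r-1$ is far below $p^{n-2}$. That bound is false whenever $\min(m,r)=2$, hence for every $n\le5$. For example, $p=3$, $n=4$, $m=2$ gives $|I(\GP)|=17>9$. It is also more than you need. A proper subspace of $u^\perp$ contains at most $p^{n-2}-1$ nonzero vectors, so it suffices that $|u^\perp\setminus I(\GP)|\ge p^{n-1}-|I(\GP)|\ge p^{n-2}$, i.e.\ $|I(\GP)|\le(p-1)p^{n-2}$. This does hold: $p^m+p^r-1\le p^{n-2}+p^2-1<(p-1)p^{n-2}$ for $n\ge4$, $p\ge3$ (in the example, $17<18$). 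It is exactly the cardinality bound the paper uses, and it holds uniformly in $u$, so the edge case $u=(0,w)$ needs no fallback. There is also a harmless slip in (d): $(-1,\dots,-1,0)$ is not excluded. It lies in $\F_p^m\times0\subseteq I(\GP)$ and satisfies $u\cdot x=-\sigma=-s$, so it works as well as your choice.

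\textbf{Comparison with the paper.} The paper does not invoke Lemma~\ref{lem:geom} for this theorem; it compares supports of pairs of codewords directly.
\begin{itemize}
\item It rules out $c(s,u)$ with $s\ne0$ covering $c(0,u')$. For $u'\notin\langle u\rangle$, the set $u^\perp\setminus(u')^\perp$, of size $(p-1)p^{n-2}$, would have to lie in $I(\GP)$. For $u'\in\langle u\rangle$, it exhibits $x\in I(\GP)\setminus0$ with $u\cdot x=-s$.
\item It rules out $c(0,u)$ covering $c(s',u')$. The count shows $u^\perp\setminus I(\GP)$ spans $u^\perp$, forcing $u'\in\langle u\rangle$, and a nonzero point of $u^\perp\cap(\F_p^m\times0)$ then gives a contradiction.
\item It reduces two codewords with $s,s'\ne0$ to the first case by subtracting.
\end{itemize}
You instead verify the hyperplane-spanning condition, as the paper does for Theorem~\ref{thm:U}, and use the weight table to sort codewords. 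Those of weight $<Q$ have at least $p^{n-1}$ zeros, so they are minimal for purely numerical reasons. Only the classes of weight $Q$, $Q+p^m-2$ and $Q+p^r-2$ need an explicit point $(1,x)$.

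Both arguments rest on the same two facts: the bound $|I(\GP)|<(p-1)p^{n-2}$, and the existence of $x\in I(\GP)\setminus0$ with $u\cdot x=-s$. Your version has the same shape as the unsaturated proof; the paper's minimality argument avoids any split by weight. In fact the split is optional for you too. Your construction in (b)--(d) works for every $s\ne0$, $u\ne0$: take $x\in\F_p^m\times0$ with $v\cdot x'=-s$ if $v\ne0$, and $(-1,\dots,-1,x'')$ with $w\cdot x''=-s$ if $v=0$. That leaves only $u=0$ for the global count.
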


\begin{table}[h]
\centering
\begin{tabular}{ll}
\toprule
weight & frequency\\
\midrule
$p^m+p^r-2$ & $p-1$\\
$Q-p^r$ & $(p-1)p^{m-1}$\\
$Q-2$ & $(p-1)(p^{m-1}-1)(p^r-1)$\\
$Q-1$ & $(p-1)(p-2)p^{n-1}$\\
$Q$ & $p^n-1+(p-1)p^{m-1}(p^r-1)$\\
$Q+p^m-2$ & $(p-1)(p^r-1)$\\
$Q+p^r-2$ & $(p-1)(p^{m-1}-1)$\\
\bottomrule
\end{tabular}
\caption{Weight distribution of $\cC_f$ in Theorem~\ref{thm:main}, $Q=(p-1)p^{n-1}$.}
\label{tab:main}
\end{table}

\begin{proof}
\emph{Weights.} Lemma~\ref{lem:Cf-weight} with $b=r$, $\delta=[w=0]$; the seven cases are $v=0$; $v\ne0,\sigma=0$; $\sigma=s$; $\sigma\notin\{0,s\}$, each with $w=0$ or $w\ne0$, together with $u=0$. All weights are positive, so $\dim\cC_f=n+1$.

\emph{Minimality.} We show no nonzero codeword covers another one outside its span. Set $H_u=\{x:u\cdot x=0\}$, $Z(s,u)=\{x\ne0:sf(x)+u\cdot x=0\}$, and note
\begin{equation}\label{eq:size}
|I(\GP)|=p^m+p^r-1\le p^2+p^{n-2}-1<(p-1)p^{n-2}\qquad(2\le m\le n-2,\ n\ge4,\ p\ge3).
\end{equation}
Codewords of $\cC_0=\{c(0,u)\}$ have constant weight $Q$ and none covers another outside its span. Three cases remain.

(i) $a=c(s,u)$, $s\ne0$, $b=c(0,u')$, $u'\ne0$. Covering means $Z(s,u)\subseteq H_{u'}$, and $Z(s,u)\supseteq(H_u\setminus I(\GP))\setminus0$. If $u'\notin\langle u\rangle$ this forces $H_u\setminus H_{u'}\subseteq I(\GP)$, impossible by \eqref{eq:size} since $|H_u\setminus H_{u'}|=(p-1)p^{n-2}$. If $u'=\lambda u\ne0$ it forces $\{x\in I(\GP)\setminus0:u\cdot x=-s\}=\emptyset$; but if $v\ne0$ the equation $v\cdot x'=-s$ has solutions in $\F_p^m\times0\subseteq I(\GP)$, and if $v=0$ then $w\ne0$ and $w\cdot x''=-s$ has a solution $x''\ne0$, giving $(-1,\dots,-1,x'')\in I(\GP)$. If $u=0$ then $|\supp(a)|=|I(\GP)|-1<Q=|\supp(b)|$.

(ii) $a=c(0,u)$, $b=c(s',u')$, $s'\ne0$. Covering means $H_u\setminus0\subseteq Z(s',u')$. On $H_u\setminus I(\GP)$ this says $u'\cdot x=0$; by \eqref{eq:size} this set has more than $p^{n-2}$ points of $H_u$, hence spans $H_u$, so $u'=\lambda u$. Then $s'=0$ on $(H_u\cap I(\GP))\setminus0$, which is nonempty because $H_u\cap(\F_p^m\times0)$ has dimension $\ge m-1\ge1$. Contradiction.

(iii) $a=c(s,u)$, $b=c(s',u')$, $s,s'\ne0$, linearly independent. Scale $b$ so that $s'=s$; then $u\ne u'$ and $a-b=c(0,u-u')$. If $\supp(b)\subseteq\supp(a)$ then $\supp(a-b)\subseteq\supp(a)$, so $a$ covers $c(0,u-u')$, contradicting (i).

\emph{Ashikhmin--Barg.} $w_{\min}=p^m+p^r-2$ and $w_{\max}=Q+p^{\max(m,r)}-2$, the latter with positive frequency since $m\ge2$. Then $w_{\min}/w_{\max}<(p-1)/p$ follows from $p(p^2+p^{n-2}-2)<(p-1)^2p^{n-1}$ for $n\ge4$.
\end{proof}

\begin{example}
$p=3$, $n=4$, $m=2$: $\cC_f$ is an $[80,5,16]_3$ code with weight enumerator $1+2z^{16}+6z^{45}+32z^{52}+54z^{53}+128z^{54}+20z^{61}$; $w_{\min}/w_{\max}=16/61<2/3$. $p=5$, $n=4$, $m=2$: $[624,5,48]_5$, $1+4z^{48}+20z^{475}+384z^{498}+1500z^{499}+1104z^{500}+112z^{523}$.
\end{example}

\begin{remark}\label{rem:MQRT}
Write $W=\F_p^m\times0$ and $W'=0\times\F_p^r$. The defining set of Theorem~\ref{thm:main} is $(W\setminus0)\cup\big((a+W')\setminus\{a\}\big)$ with $a=(-1,\dots,-1,0,\dots,0)$, whereas \cite[Theorem 3.11]{MQRT20} uses $(W\setminus0)\cup(W'\setminus0)$ and proves, for $2\le m<r\le n-2$ and every $p$, that the resulting code is minimal with $w_{\min}/w_{\max}\le(p-1)/p$; the case $m=r$ is covered there only for $p=3$ \cite[Theorem 3.10]{MQRT20}. In both constructions the minimum distance is $|D|=p^m+p^r-2$, the weight of the codeword $f$ itself, so the two families share length, dimension and minimum distance. Their weight distributions differ for every admissible parameter set, so the codes are never equivalent: for $D_0=(W\setminus0)\cup(W'\setminus0)$ the character sum over $W\cup W'$ is $p^m[v=0]+p^r[w=0]-1$, independent of $y$, so the argument of Lemma~\ref{lem:Cf-weight} gives $\wt(c(s,u))=Q-2+p^m[v=0]+p^r[w=0]$ for $s,u\ne0$, and the nonzero weights of $\cC_{f_{D_0}}$ are $p^m+p^r-2$, $Q-2$, $Q$, $Q+p^m-2$, $Q+p^r-2$; the weight $Q-1$, which occurs in Table~\ref{tab:main} with frequency $(p-1)(p-2)p^{n-1}>0$, never occurs. For $p=3$, $n=5$, $m=2$ the two enumerators are $1+2z^{34}+416z^{160}+242z^{162}+52z^{169}+16z^{187}$ (\cite[Example 2]{MQRT20}) and $1+2z^{34}+6z^{135}+104z^{160}+162z^{161}+398z^{162}+52z^{169}+4z^{187}$ (Table~\ref{tab:main}). Theorem~\ref{thm:main} also covers $m=r$ for every $p$.
\end{remark}

\begin{theorem}[unsaturated notion]\label{thm:U}
Let $p$ be an odd prime, $n\ge3$, $2\le m\le n-1$, $r=n-m$, and $I=[m]_{p-1}\cup V_{p-1}$ with the unsaturated notion. Then $\cC_f$ is a minimal $[p^n-1,\,n+1]$ code whose weight distribution is given by Lemma~\ref{lem:Cf-weight}.
\end{theorem}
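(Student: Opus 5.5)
The plan is to apply the geometric criterion, Lemma~\ref{lem:geom}, to the defining set $D'=\{(f(x),x):x\ne0\}\subseteq\F_p^{n+1}$. Throughout, $I(\GP)=(\F_p^m\times0)\cup\big((\F_p^*)^m\times(\F_p^r\setminus0)\big)$ is given by Lemma~\ref{lem:ideal-sets}(U), and $N=\F_p^n\setminus I(\GP)$ is its complement.

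First I check that $D'$ spans $\F_p^{n+1}$. The vectors $(0,x)$ with $x\in N$ already span $0\times\F_p^n$, because $N$ contains $x''\ne0$ together with any $x'$ having a zero coordinate. Any single point $(1,x)$ with $x\in I(\GP)\setminus0$ then completes the spanning set. Alternatively, this follows from Lemma~\ref{lem:Cf-weight}, since all weights there are positive.

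It then remains to show, for every nonzero $(s,u)$ with $u=(v,w)$, that $D'\cap(s,u)^\perp$ spans an $n$-dimensional space. This set consists of
\[
\{(0,x):x\in N,\ u\cdot x=0\}\ \cup\ \{(1,x):x\in I(\GP)\setminus0,\ u\cdot x=-s\}.
\]
The basic tool is an explicit spanning lemma for $N\cap H_u$, where $H_u=u^\perp\subseteq\F_p^n$. For each $i\le m$, the set $N_i=\{x:x_i=0,\ x''\ne0\}\subseteq N$ is the subspace $\{x_i=0\}$ with the $m-1$-dimensional subspace $\{x_i=0,x''=0\}$ removed. Intersected with $H_u$, it is a subspace of dimension at least $n-2$ minus a subspace of codimension at least $r-1$ in it, so it spans $\{x_i=0\}\cap H_u$ whenever the removed part is proper. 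Using two indices $i=1,2$ (this is where $m\ge2$ enters), the sets $N_1\cap H_u$ and $N_2\cap H_u$ together span $H_u$ except for a short list of special $u$, namely those with $u$ proportional to $e_1$ or $e_2$, or with $w=0$ and $r$ small. These special $u$ are handled by hand, adding vectors $x\in N$ with $x_i=0$ for other $i$, or using the explicit structure when $v=0$.

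In the case $s=0$, the points $(0,x)$ with $x\in N\cap H_u$ then give $0\times H_u$, an $(n-1)$-dimensional space. One further point $(1,x)$ with $x\in H_u\cap I(\GP)\setminus0$ raises the dimension to $n$. Such an $x$ exists because $H_u\cap(\F_p^m\times0)$ has dimension at least $m-1\ge1$.

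In the case $s\ne0$, the target hyperplane meets $0\times\F_p^n$ in $0\times H_u$, and again I obtain $0\times H_u$ (or, for $u=0$, all of $0\times\F_p^n$) from $N$. It then suffices to find one $x\in I(\GP)\setminus0$ with $u\cdot x=-s$. If $v\ne0$, take $x\in\F_p^m\times0$. If $v=0$, then $w\ne0$, and I take $x=(1,\dots,1,x'')$ with $w\cdot x''=-s$, which lies in $I(\GP)$ in the unsaturated case.

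The main obstacle is the spanning lemma for $N\cap H_u$, in particular its degenerate cases. When $u$ is concentrated on one lower coordinate, say $u=e_1$, the set $N_1$ lies entirely inside $H_u$ but $N_2\cap H_u$ might not fill the remaining direction. When $r=1$, removing $\{x''=0\}$ removes half the structure. When $m=n-1$, the upper part is one-dimensional. In each of these cases I would first try to use the points of $N$ with $x_1\ne0$ and $x_2=0$ and check the dimension count directly. Failing that, I would replace some of the $(0,x)$ by differences $(1,x)-(1,x')$ of two points of $I(\GP)$ on the same affine hyperplane $u\cdot x=-s$, since such differences also lie in $0\times H_u$.
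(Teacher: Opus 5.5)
You have the same framework as the paper: Lemma~\ref{lem:geom} applied to $D'$, the split into $s=0$ and $s\ne0$, and one point $(1,x)$ off $0\times\F_p^n$. There is a genuine gap, however. The step carrying all the weight is false as stated.

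You claim that the points $(0,x)$ with $x\in N\cap H_u$ span $0\times H_u$, apart from a few special $u$ that can be repaired with further points of $N$. But for some $u$ the set $N\cap H_u$ is itself too small, so no choice of points of $N$ helps:
\begin{itemize}
\item For $m=2$ and $u=(u_1,u_2,0,\dots,0)$ with $u_1u_2\ne0$, for any $r$: a point of $N$ has $x_1=0$ or $x_2=0$, and then $u_1x_1+u_2x_2=0$ forces $x'=0$. So $N\cap H_u=0\times(\F_p^r\setminus0)$, which spans only an $(n-2)$-dimensional space.
\item For $n=3$, $m=2$ and $u=(1,0,1)$: $N\cap H_u=\{(-c,0,c):c\ne0\}$, which spans a line.
\item For $v=0$ and $r=1$: $N\cap H_u=\emptyset$.
\end{itemize}
In each of these cases your single extra point $(1,x)$ leaves a span of dimension less than $n$. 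Your list of exceptions is also wrong in the other direction. The case $u=e_1$ that you worry about is harmless, since $N_1\subseteq H_{e_1}$ already spans $\{x_1=0\}=H_u$.

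The missing idea is the one you mention only as a last resort, and it has to be the main argument. Do not ask $N\cap H_u$ to span $H_u$. Ask only that $S_0+\mathrm{span}(N\cap H_u)=H_u$, where $S_0=H_u\cap(\F_p^m\times0)$, and obtain $0\times S_0$ from differences of the points $(1,x)$:
\begin{itemize}
\item For $s=0$, use $x\in S_0\setminus0$. This set is nonempty since $\dim S_0\ge m-1\ge1$, and its differences span $S_0$ because $p\ge3$.
\item For $s\ne0$ and $v\ne0$, use the affine hyperplane $\{x':v\cdot x'=-s\}\times0\subseteq I(\GP)$. It avoids $0$ and its direction space is $S_0$.
\item For $s\ne0$ and $v=0$, use $(\F_p^*)^m\times\{x'':w\cdot x''=-s\}$. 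Its differences span $\F_p^m\times0$, and this is essential when $r=1$.
\end{itemize}
The identity $S_0+\mathrm{span}(N\cap H_u)=H_u$ is then easy:
\begin{itemize}
\item If $v\ne0$, the map $x\mapsto x''$ sends $H_u$ onto $\F_p^r$ with kernel $S_0$. Moreover $N\cap H_u$ meets every fibre over $x''\ne0$: take $x'$ supported on a single coordinate $j$ with $v_j\ne0$, which has a zero coordinate because $m\ge2$.
\item If $v=0$ and $r\ge2$, then $N\cap H_u$ alone spans $\F_p^m\times\{x'':w\cdot x''=0\}=H_u$.
\item If $v=0$ and $r=1$, then $H_u=S_0$.
\end{itemize}
This is exactly how the paper's proof runs. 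With it, your case split goes through uniformly and no case-by-case repair with further points of $N$ is needed.
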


\begin{proof}
Write $\cC_f=\cC_{D'}$ with $D'=\{(0,x):x\notin I(\GP),x\ne0\}\cup\{(1,x):x\in I(\GP)\setminus0\}$. Here $I(\GP)=(\F_p^m\times0)\cup((\F_p^*)^m\times\F_p^r)$, $N:=\F_p^n\setminus I(\GP)=\{x:x''\ne0,\ x'\text{ has a zero coordinate}\}$, and $S_0:=(\F_p^m\times0)\cap H_u$. The set $N$ spans $\F_p^n$: it contains $(0,e_j)$ for every upper index $j$, and, since $m\ge2$, it contains $(e_i,e_1)$ for every lower index $i$, so $(e_i,0)=(e_i,e_1)-(0,e_1)$ lies in its span. Hence $D'\supseteq(0,N)\cup\{(1,x_0)\}$ spans $\F_p^{n+1}$, and by Lemma~\ref{lem:geom} it suffices that $D'\cap(s,u)^\perp$ spans $(s,u)^\perp$ for every $(s,u)\ne0$.

We use repeatedly: if $v\ne0$, then for every $x''\in\F_p^r$ there is $x'$ with a zero coordinate and $v\cdot x'=-w\cdot x''$ (choose $j$ with $v_j\ne0$, put $x_i=0$ for $i\ne j$ and $x_j=-w\cdot x''/v_j$; this uses $m\ge2$). Hence the $x''$-projection of $H_u\cap N$ is all of $\F_p^r\setminus0$. Also, for $p\ge3$ the set $(\F_p^*)^m$ spans $\F_p^m$, and a subspace minus a proper subspace spans the subspace.

\emph{Case $s=0$, $u\ne0$.} Then $(0,u)^\perp=\F_p\times H_u$ and $D'\cap(0,u)^\perp=\{(0,x):x\in H_u\cap N\}\cup\{(1,x):x\in(H_u\cap I(\GP))\setminus0\}$. Since $\dim S_0\ge m-1\ge1$, the second set is nonempty and contains $S_0\setminus0$; differences of its elements give $0\times S_0$. So the span contains $0\times(S_0+\mathrm{span}(H_u\cap N))$ and some $(1,x_0)$, and it suffices to show $S_0+\mathrm{span}(H_u\cap N)=H_u$. If $v\ne0$, the $x''$-projection identifies $H_u/S_0$ with $\F_p^r$, and $H_u\cap N$ projects onto $\F_p^r\setminus0$. If $v=0$ and $r\ge2$, then $H_u=\F_p^m\times H_w'$ with $H_w'=\{x'':w\cdot x''=0\}\ne0$, $S_0=\F_p^m\times0$, and $H_u\cap N\supseteq\{x'\text{ with a zero coordinate}\}\times(H_w'\setminus0)$ projects onto $H_w'\setminus0$. If $v=0$ and $r=1$, then $H_u=\F_p^m\times0=S_0$.

\emph{Case $s\ne0$.} Scaling, $s=1$. Since $(1,u)^\perp=\{(-u\cdot x,x):x\in\F_p^n\}$ is a graph over $\F_p^n$, it suffices that the $x$-parts $(H_u\cap N)\cup\{x\in I(\GP):u\cdot x=-1\}$ span $\F_p^n$. If $u=0$, then $H_u=\F_p^n$, the second set is empty, and $N$ spans $\F_p^n$ as shown above. If $v\ne0$, the set $\{x'\in\F_p^m:v\cdot x'=-1\}\times0\subseteq I(\GP)$ is an affine hyperplane of $\F_p^m\times0$ not through $0$, hence spans $\F_p^m\times0$, and $H_u\cap N$ projects onto $\F_p^r\setminus0$; together they span $\F_p^n$. If $v=0$ and $r\ge2$, then $H_u\cap N$ spans $\F_p^m\times H_w'$ as above, and $(\F_p^*)^m\times\{x'':w\cdot x''=-1\}\subseteq I(\GP)$ supplies a vector outside $\F_p^m\times H_w'$. If $v=0$ and $r=1$, then $H_u\cap N=\emptyset$, but $\{x\in I(\GP):u\cdot x=-1\}=(\F_p^*)^m\times\{-w_n^{-1}\}$, whose differences span $\F_p^m\times0$ and which contains a vector with nonzero last coordinate.
\end{proof}

\begin{proposition}\label{prop:U-AB}
Under the hypotheses of Theorem~\ref{thm:U}, $\cC_f$ violates the Ashikhmin--Barg condition.
\end{proposition}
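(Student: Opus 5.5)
The plan is to exhibit one explicit pair of codewords whose weights $w_1\le w_2$ satisfy $p\,w_1\le(p-1)w_2$. Since $w_{\min}\le w_1$ and $w_{\max}\ge w_2$, this gives $w_{\min}/w_{\max}\le(p-1)/p$. All weights are read off Lemma~\ref{lem:Cf-weight} in case (U) with $b=r$, so that $\delta=[w=0]$.

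\emph{Small codeword.} For $w_1$ I would take the characteristic-function codeword $c(1,0)$. Its weight is
\[
|I(\GP)|-1=p^m+(p-1)^m(p^r-1)-1
\]
by Lemma~\ref{lem:ideal-sets}. This is the analogue of the minimum-weight word $p^m+p^r-2$ in Theorem~\ref{thm:main}. The point is that $(p-1)^m(p^r-1)$ is smaller than $Q=(p-1)p^{m-1}p^r$ by the factor $((p-1)/p)^{m-1}$.

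\emph{Large codeword.} For $w_2$ I would use the weights of $c(s,u)$ with $s\ne0$ in which $\Phi$ is positive, and pick whichever is largest:
\begin{itemize}
\item $k=0$, $w\ne0$, giving $Q-1+p^m-(p-1)^m$;
\item $k=2$, $w=0$ (possible since $m\ge2$), giving $Q-1+(p-1)^{m-2}(p^r-1)$;
\item $k=1$, $w\ne0$, giving $Q-1+(p-1)^{m-1}$.
\end{itemize}
Each of these has positive frequency under the hypotheses $m\ge2$, $r\ge1$. In any case $w_{\max}\ge Q$, because the codewords $c(0,u)$ have weight $Q$.

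\emph{The inequality.} The target is
\[
p\big(p^m+(p-1)^m(p^r-1)-1\big)\le(p-1)w_2 .
\]
Take $w_2=Q$ first. The dominant terms compare as $p(p-1)^m p^r$ against $(p-1)^2p^{m+r-1}$, that is, $(p-1)^{m-2}$ against $p^{m-2}$. So for $m\ge3$ one has a margin of order $(p-1)^2p^{r}\big(p^{m-2}-(p-1)^{m-2}\big)\ge(p-1)^2p^{r+m-3}$. I expect this margin to absorb the lower-order term $p^{m+1}$ after an elementary estimate, splitting into $r\ge2$ versus $r=1$ if needed, and possibly using the larger $w_2$ from the list above.

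\emph{Main obstacle: $m=2$.} Here the leading terms cancel exactly, and with $w_2=Q$ the left side exceeds the right side by $2p(p-1)$. I would switch to $w_2=Q+p^r-2$, the $k=2$, $w=0$ weight. This gains $(p-1)(p^r-2)$, which covers $2p(p-1)$ as soon as $p^r\ge2p+2$, i.e.\ for all $r\ge2$.

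The residual case $m=2$, $r=1$ (so $n=3$) needs separate treatment. The available large weights there are $Q+2p-2$ and $Q+p-2$, which do not suffice against $c(1,0)$. So I would look for a smaller $w_1$ among the negative-$\Phi$ weights, such as $k=1$, $w=0$, giving $Q-1-(p-1)(p-1)$, or $k=m$ with appropriate parity. Failing that, I would compute the full weight list for $(m,n)=(2,3)$ explicitly and check the ratio directly, as in the example after Theorem~\ref{thm:main}, with a computer check for $p=3$ as a sanity test. If the ratio genuinely exceeds $(p-1)/p$ in this one case, the hypotheses of the proposition would need to exclude it; I regard this case as the place where the argument is most likely to need adjustment.
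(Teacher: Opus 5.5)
Your framework matches the paper's: you read realized weights off Lemma~\ref{lem:Cf-weight} and exhibit one pair with $p\,w_1\le(p-1)w_2$. Your $m=2$, $r\ge2$ argument with $Q+p^r-2$ is also correct. However, two cases are not actually proved, and in one of them the step you expect to work is false.

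\textbf{The case $m\ge3$.} The default choice $w_2=Q$ fails. The margin is $(p-1)^2p^{r+1}(p^{m-2}-(p-1)^{m-2})\ge(p-1)^2p^{n-2}$ (you dropped a factor $p$). The left side still carries $p^{m+1}-p(p-1)^m-p$, and for $r=1$ one has $p^{m+1}=p^n>(p-1)^2p^{n-2}$. The inequality is genuinely false in some cases:
\begin{itemize}
\item $p=3$, $n=4$, $m=3$: here $w_A:=|I(\GP)|-1=42$ and $Q=54$, so $3\cdot42=126>108$.
\item $p=3$, $m=4$, $r=1$: the comparison gives $336>324$.
\end{itemize}
Your $k=2,w=0$ and $k=1,w\ne0$ weights (both $57$ in the first example) do not help either. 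The weight you need is $w_B=Q-1+p^m-(p-1)^m$, coming from $v=0$, $w\ne0$. Its extra $(p-1)p^m$ on the right cancels $p^{m+1}$ down to $p^m$, so $p\,w_A<(p-1)w_B$ becomes $p^m+(p-1)^mp^{r+1}-(p-1)^m-1<(p-1)^2p^{n-1}$. Since $(p-1)^mp^{r+1}=(p-1)^2p^{n-1}((p-1)/p)^{m-2}$, the remaining room is at least $(p-1)^2p^{n-2}\ge\tfrac43p^{n-1}>p^m$. This is the paper's argument for $m\ge3$.

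\textbf{The case $(m,r)=(2,1)$.} You correctly see that $c(1,0)$ is useless; in fact $p\,w_A-(p-1)w_B=2p-2>0$ for every $r$ when $m=2$. You also name the right replacement, $w_C=Q-1-(p-1)^{m-1}(p^r-1)$ from $k=1$, $w=0$. But you stop short and even allow that the proposition might fail there. It does not: for $m=2$, $p\,w_C<(p-1)w_B$ rearranges to $Q-p<p(p-1)(p^r-1)+2(p-1)^2=Q+(p-1)(p-2)$, which holds for every $r\ge1$. The paper uses exactly this pair $(w_C,w_B)$ for all $m=2$, so your case split on $r$ and the search for a larger $w_2$ are unnecessary. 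With these two inequalities supplied, your outline becomes a complete proof.
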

\begin{proof}
Put $q=(p-1)/p$ and consider the weights $w_A=|I(\GP)|-1=p^m+(p-1)^m(p^r-1)-1$ ($u=0$), $w_B=Q-1+p^m-(p-1)^m$ ($v=0$, $w\ne0$, of frequency $(p-1)(p^r-1)>0$) and $w_C=Q-1-(p-1)^{m-1}(p^r-1)$ ($k=1$, $w=0$), all realized by Lemma~\ref{lem:Cf-weight}. It suffices to show $p\,w_A<(p-1)w_B$ or $p\,w_C<(p-1)w_B$.

If $m\ge3$, expanding $p\,w_A<(p-1)w_B$ gives the equivalent inequality
\[
p^m+(p-1)^mp^{r+1}-(p-1)^m-1<(p-1)^2p^{n-1}.
\]
Since $(p-1)^mp^{r+1}=(p-1)^2p^{n-1}q^{m-2}$, the right side minus this term is $(p-1)^2p^{n-1}(1-q^{m-2})\ge(p-1)^2p^{n-2}\ge\frac43p^{n-1}>p^m$, using $m\le n-1$ and $(p-1)^2/p\ge4/3$ for $p\ge3$.

If $m=2$, then $p\,w_C<(p-1)w_B$ is equivalent to $Q-1<p(p-1)(p^r-1)+(p-1)(2p-1)=(p-1)(p^{n-1}+p-1)=Q+(p-1)^2$.
\end{proof}

\begin{remark}
The saturated and unsaturated codes have the same length and dimension but different minimum distances; e.g.\ for $p=3$, $n=4$, $m=2$ they are $[80,5,16]_3$ and $[80,5,37]_3$, and for $p=5$, $n=4$, $m=2$ they are $[624,5,48]_5$ and $[624,5,403]_5$.
\end{remark}

\section{The ternary case with arbitrary multiplicities}\label{sec:ternary}

For $p=3$ the restriction to multiplicities in $\{0,p-1\}$ can be removed entirely, because every factor of the generating function simplifies: with $\omega$ a primitive cube root of unity and $a\in\F_3$,
\[
1+\omega^a+\omega^{2a}=3[a=0],\qquad 1+\omega^a=\begin{cases}2,&a=0,\\-\omega^{-a},&a\ne0.\end{cases}
\]
For a multiplicity vector $v\in\{0,1,2\}^k$ put $T_j(v)=\{i:v_i=j\}$, $t_j=|T_j(v)|$, and $\mathrm{box}(v)=\prod_i\{0,\dots,v_i\}\subseteq\F_3^k$. For $u\in\F_3^k$ let $z(u)=|\{i\in T_1(v):u_i=0\}|$, $o(u)=t_1-z(u)$, $s_1(u)=\sum_{i\in T_1(v)}u_i\in\F_3$, $e(u)=[u_{T_2(v)}=0]$, and $\phi(c)=\sum_{y\in\F_3^*}\omega^{-yc}=2[c=0]-[c\ne0]$. Finally let $N_c(o)$ be the number of vectors in $\{1,2\}^o$ with coordinate sum $c\in\F_3$, so that $N_0(o)=(2^o+2(-1)^o)/3$ and $N_c(o)=(2^o-(-1)^o)/3$ for $c\ne0$.

\begin{lemma}\label{lem:ternary-box}
For $u\in\F_3^k$, $\displaystyle\sum_{y\in\F_3^*}\sum_{x\in\mathrm{box}(v)}\omega^{y\,u\cdot x}=3^{t_2}e(u)\,2^{z(u)}(-1)^{o(u)}\,\phi(s_1(u))$.
\end{lemma}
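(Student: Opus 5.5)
The plan is to factor the inner sum over the box coordinatewise, evaluate each factor using the two ternary identities displayed just before the lemma, and then carry out the outer sum over $y\in\F_3^*$. Since $\mathrm{box}(v)=\prod_i\{0,\dots,v_i\}$ is a product set, for fixed $y$ we have
\[
\sum_{x\in\mathrm{box}(v)}\omega^{y\,u\cdot x}=\prod_{i}\Big(\sum_{j=0}^{v_i}\omega^{yu_ij}\Big).
\]
We split the index set according to $v_i\in\{0,1,2\}$, and treat the three kinds of coordinates in turn.

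For $i\in T_0(v)$ the factor is $1$. For $i\in T_2(v)$ the factor is $1+\omega^{a}+\omega^{2a}$ with $a=yu_i$, which equals $3[u_i=0]$ because $y\ne0$. The product over $T_2(v)$ is therefore $3^{t_2}e(u)$, and this does not depend on $y$.

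For $i\in T_1(v)$ the factor is $1+\omega^{yu_i}$. By the second identity this equals $2$ when $u_i=0$ and $-\omega^{-yu_i}$ when $u_i\ne0$. Taking the product over $T_1(v)$ gives
\[
2^{z(u)}(-1)^{o(u)}\,\omega^{-y\sum_{i\in T_1(v)}u_i}=2^{z(u)}(-1)^{o(u)}\,\omega^{-y s_1(u)},
\]
where we may sum over all of $T_1(v)$ because the coordinates with $u_i=0$ contribute nothing to the exponent.

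Multiplying the three contributions, only the factor $\omega^{-ys_1(u)}$ depends on $y$. Summing it over $y\in\F_3^*$ gives exactly $\phi(s_1(u))$ by the definition of $\phi$, which yields the stated formula.

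The calculation is routine. The one step needing care is the sign and exponent in $1+\omega^{a}=-\omega^{-a}$ for $a\ne0$, which follows from $1+\omega+\omega^2=0$ (so $1+\omega^a=-\omega^{2a}=-\omega^{-a}$). I would verify this identity first, since the sign of the exponent determines whether the result is $\phi(s_1)$ or $\phi(-s_1)$. In fact $\phi(c)=\phi(-c)$, because $\phi$ depends only on whether $c=0$, so the final formula is robust either way.
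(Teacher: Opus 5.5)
Your proposal is correct and follows the same approach as the paper: you factor the box sum coordinatewise, evaluate the $T_2$ factors as $3[u_i=0]$ and the $T_1$ factors via $1+\omega^a=-\omega^{-a}$ to obtain $3^{t_2}e(u)\,2^{z(u)}(-1)^{o(u)}\omega^{-ys_1(u)}$, and then sum over $y\in\F_3^*$ to get $\phi(s_1(u))$. Your check of the identity $1+\omega^a=-\omega^{2a}=-\omega^{-a}$ and your remark that $\phi(c)=\phi(-c)$ are both accurate.
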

\begin{proof}
The inner sum factors as $\prod_{i\in T_2}(1+\omega^{yu_i}+\omega^{2yu_i})\prod_{i\in T_1}(1+\omega^{yu_i})=3^{t_2}e(u)\,2^{z(u)}(-1)^{o(u)}\omega^{-ys_1(u)}$, and summing over $y$ gives $\phi(s_1(u))$.
\end{proof}

\begin{theorem}[antichain]\label{thm:ternary-antichain}
Let $v\in\{0,1,2\}^n$ with $v\ne0$ and $v\ne(2,\dots,2)$, and let $D=\F_3^n\setminus\mathrm{box}(v)$, so $|D|=3^n-2^{t_1}3^{t_2}>0$. For $u\ne0$,
\[
\wt(c_D(u))=\tfrac13\Big(2|D|+3^{t_2}e(u)\,2^{z(u)}(-1)^{o(u)}\phi(s_1(u))\Big),
\]
and the number of $u\ne0$ with $e(u)=e$, $z(u)=z$, $s_1(u)=c$ is $\big(1\cdot[e=1]+(3^{t_2}-1)[e=0]\big)\binom{t_1}{z}N_c(t_1-z)\,3^{n-t_1-t_2}$, less one for $(e,z,c)=(1,t_1,0)$.
\end{theorem}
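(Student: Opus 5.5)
The plan is to run the argument of Lemma~\ref{lem:weight} with $\cI(\GP)$ replaced by the single box. By Proposition~\ref{prop:recover}(1), for an antichain the generalized order ideals contained in $I$ form exactly $\mathrm{box}(v)$. So $\cH_{\cI(\GP)}(\zeta^{u_1y},\dots,\zeta^{u_ny})$ is the character sum $\sum_{x\in\mathrm{box}(v)}\omega^{y\,u\cdot x}$.

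Summing over $y\in\F_3^*$ and applying Lemma~\ref{lem:ternary-box} gives $3^{t_2}e(u)2^{z(u)}(-1)^{o(u)}\phi(s_1(u))$. Substituting into Lemma~\ref{lem:weight} with $p=3$ gives
\[
\wt(c_D(u))=\tfrac23|D|+\tfrac13\cdot 3^{t_2}e(u)2^{z(u)}(-1)^{o(u)}\phi(s_1(u)),
\]
which is the stated formula. The value $|D|=3^n-2^{t_1}3^{t_2}$ follows from $|\mathrm{box}(v)|=\prod_i(v_i+1)$. It is positive because $v\ne(2,\dots,2)$ forces $t_1\ge1$ or $t_1+t_2<n$, so $2^{t_1}3^{t_2}<3^n$. (The condition $v\ne0$ only rules out the trivial box $\{0\}$.) Lemma~\ref{lem:weight} assumes $u\ne0$, which is why the formula is stated for nonzero $u$.

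For the frequencies I will count $u$ coordinate block by coordinate block. Split $[n]$ into $T_2(v)$, $T_1(v)$ and $T_0(v)$.
\begin{itemize}
\item The weight depends on $u_{T_2}$ only through $e(u)=[u_{T_2}=0]$. So $e=1$ gives one choice of $u_{T_2}$ and $e=0$ gives $3^{t_2}-1$ choices.
\item On $T_1$, choose which $z$ coordinates of $u_{T_1}$ vanish: $\binom{t_1}{z}$ ways. The remaining $o=t_1-z$ coordinates lie in $\{1,2\}$, and their sum must equal $c$ in $\F_3$. This gives $N_c(o)$ choices, since the zero coordinates do not affect $s_1$.
\item The coordinates in $T_0$ are unconstrained, giving $3^{n-t_1-t_2}$ choices.
\end{itemize}
The product of these counts is the stated expression. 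The zero vector falls in the class $(e,z,c)=(1,t_1,0)$, so we subtract one there.

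It remains to check the closed form of $N_c(o)$ quoted before the lemma; this is the only step needing any verification. I would use the generating function $(\omega^{y}+\omega^{2y})^o=(-1)^o$ for $y\ne0$. Then
\[
N_c(o)=\tfrac13\sum_{y\in\F_3}\omega^{-yc}(\omega^y+\omega^{2y})^o=\tfrac13\big(2^o+(-1)^o\phi(c)\big),
\]
which gives $(2^o+2(-1)^o)/3$ for $c=0$ and $(2^o-(-1)^o)/3$ for $c\ne0$. This requires $o\ge0$ and holds also for $o=0$, where $N_0(0)=1$ and $N_c(0)=0$. The rest is routine bookkeeping.
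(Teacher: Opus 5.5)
Your proposal is correct and matches the paper's proof. The weight comes from Lemma~\ref{lem:weight} combined with Lemma~\ref{lem:ternary-box}, and the frequencies from the same block-by-block count over $T_2(v)$, $T_1(v)$ and the free coordinates, with the zero vector removed from the class $(1,t_1,0)$. Your character-sum check of the closed form for $N_c(o)$ also fills in a formula the paper only quotes.
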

\begin{proof}
Lemma~\ref{lem:weight} and Lemma~\ref{lem:ternary-box}. For the count, $u_{T_2}$ is zero or not, $u_{T_1}$ has $z$ zero coordinates and $t_1-z$ coordinates in $\{1,2\}$ with prescribed sum, and the remaining coordinates are free.
\end{proof}

\begin{remark}
Theorem~\ref{thm:ternary-antichain} contains the ternary cases of \cite[Section 4]{HKN19}, which require $t_1+t_2\le2$, and, up to the coordinate scaling $\{0,-1\}=-\{0,1\}$, the codes of \cite[Section 4]{PL22}, which require $t_2=0$ and $t_1\le3$. Since the meet of two boxes is a box, families of several maximal elements are handled by \eqref{eq:incl-excl} with no further computation. Coordinates of multiplicity $1$ make $D$ not closed under scalar multiplication, so these codes are not repetitions of projective codes.
\end{remark}

\begin{theorem}[two levels, saturated]\label{thm:ternary-S}
Let $p=3$, $1\le m\le n-1$, and $I=[m]_2\cup B$ a saturated generalized order ideal of $\GH(m,n)$ in which $B$ has nonzero multiplicity vector $v_B\in\{0,1,2\}^{n-m}$, so that $I(\GP)=(\F_3^m\times0)\cup\big(\{2\}^m\times(\mathrm{box}(v_B)\setminus0)\big)$ and $|D|=3^n-3^m-2^{t_1}3^{t_2}+1$, the $t_j$ referring to $v_B$. For $u=(v,w)\ne0$ with $\sigma=\sum_{i\le m}v_i$,
\[
\wt(c_D(u))=\tfrac13\Big(2|D|+2\cdot3^m[v=0]+3^{t_2}e(w)\,2^{z(w)}(-1)^{o(w)}\phi(\sigma+s_1(w))-\phi(\sigma)\Big).
\]
The number of $u$ in the class determined by $[v=0]$, $\sigma$, $e(w)=e$, $z(w)=z$, $s_1(w)=c$ is the product of the number of $v$ in the class ($1$ for $v=0$; $3^{m-1}-1$ for $v\ne0$, $\sigma=0$; $3^{m-1}$ for $\sigma\ne0$) and $\big([e=1]+(3^{t_2}-1)[e=0]\big)\binom{t_1}{z}N_c(t_1-z)\,3^{n-m-t_1-t_2}$, less one for $u=0$.
\end{theorem}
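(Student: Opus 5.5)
The argument has three steps: describe $I(\GP)$, compute its character sum with Lemma~\ref{lem:ternary-box}, then apply Lemma~\ref{lem:weight}.

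\emph{Description of $I(\GP)$.} A saturated generalized order ideal $J\subseteq I$ has one of two shapes.
\begin{itemize}
\item[(a)] Its upper part is empty. Then its lower part is an arbitrary sub-multiset of $[m]_2$. This gives $\F_3^m\times0$.
\item[(b)] Its upper part is a nonzero sub-multiset of $B$. Then saturation forces every lower element to have multiplicity $2$, and the upper part ranges over $\mathrm{box}(v_B)\setminus0$.
\end{itemize}
The two pieces are disjoint, so $|I(\GP)|=3^m+2^{t_1}3^{t_2}-1$, which gives $|D|$.

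\emph{Character sum.} Fix $y\in\F_3^*$ and $u=(v,w)\ne0$.
\begin{itemize}
\item On $\F_3^m\times0$ the sum is $3^m[v=0]$. Summing over the two values of $y$ gives $2\cdot3^m[v=0]$.
\item On the second piece, $u\cdot x=v\cdot(2,\dots,2)+w\cdot x''=-\sigma+w\cdot x''$. The sum therefore factors as $\omega^{-y\sigma}\big(\sum_{x''\in\mathrm{box}(v_B)}\omega^{y\,w\cdot x''}-1\big)$.
\end{itemize}
The proof of Lemma~\ref{lem:ternary-box}, before summing over $y$, shows that the inner box sum equals $3^{t_2}e(w)2^{z(w)}(-1)^{o(w)}\omega^{-ys_1(w)}$. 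Multiplying by $\omega^{-y\sigma}$ and summing over $y\in\F_3^*$ gives
\[
3^{t_2}e(w)2^{z(w)}(-1)^{o(w)}\phi(\sigma+s_1(w))-\phi(\sigma).
\]
The term $-\phi(\sigma)$ comes from removing $0$ from the box. Substituting into Lemma~\ref{lem:weight} with $p=3$ gives exactly the stated formula.

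\emph{Frequencies.} The weight depends only on $[v=0]$, $\sigma$, $e(w)$, $z(w)$ and $s_1(w)$, and the $v$-data and $w$-data vary independently.
\begin{itemize}
\item The count of $v$ is standard. There is one vector $v=0$; there are $3^{m-1}-1$ nonzero $v$ with $\sigma=0$; for each $\sigma\ne0$ there are $3^{m-1}$ vectors.
\item The count of $w$ repeats the counting argument of Theorem~\ref{thm:ternary-antichain} on $\F_3^{n-m}$ with $v_B$. The coordinates $w_{T_2}$ are either all zero or not, giving $1$ or $3^{t_2}-1$ choices. On $T_1$ there are $z$ zero coordinates, chosen in $\binom{t_1}{z}$ ways, and the remaining $t_1-z$ coordinates lie in $\{1,2\}$ with sum $c$, giving $N_c(t_1-z)$ choices. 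The other coordinates are free, contributing $3^{n-m-t_1-t_2}$.
\end{itemize}
The class containing $u=0$, namely $v=0$, $e=1$, $z=t_1$, $c=0$, loses one.

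The only delicate point is the bookkeeping in the character sum. The $-1$ from excluding $0$ from the box must be carried with the twist $\omega^{-y\sigma}$; this produces $-\phi(\sigma)$ rather than a constant. The argument of $\phi$ must also be $\sigma+s_1(w)$, since the box sum contributes $\omega^{-ys_1(w)}$ and the lower part contributes $\omega^{-y\sigma}$. Everything else is a direct reuse of Lemmas~\ref{lem:weight} and~\ref{lem:ternary-box}.
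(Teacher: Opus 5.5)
Your proposal is correct and matches the paper's proof. You split $I(\GP)$ into $\F_3^m\times0$ and $\{2\}^m\times(\mathrm{box}(v_B)\setminus0)$, factor the box sum as in the proof of Lemma~\ref{lem:ternary-box} with the $-1$ for the removed origin kept under the twist $\omega^{-y\sigma}$, then sum over $y$, apply Lemma~\ref{lem:weight}, and count frequencies as in Theorem~\ref{thm:ternary-antichain}, spelling out the description of $I(\GP)$ and the counting that the paper leaves implicit.
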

\begin{proof}
For $y\ne0$ the character sum over $I(\GP)$ is $3^m[v=0]+\omega^{-y\sigma}\big(\sum_{x''\in\mathrm{box}(v_B)}\omega^{y\,w\cdot x''}-1\big)$, and by the proof of Lemma~\ref{lem:ternary-box} the inner sum equals $3^{t_2}e(w)2^{z(w)}(-1)^{o(w)}\omega^{-ys_1(w)}$. Summing over $y$ and applying Lemma~\ref{lem:weight} gives the formula; the counts are as in Theorem~\ref{thm:ternary-antichain}.
\end{proof}

The weight depends on $w$ only through $(e,z,s_1)$ and on $v$ only through $[v=0]$ and $\sigma$, so the number of nonzero weights is bounded by a function of $t_1$ alone. The linear condition $\sigma=0$ of Theorem~\ref{thm:complement-S} is replaced by the two conditions $\sigma=0$ and $\sigma+s_1(w)=0$, in which the upper-level coordinates of multiplicity one enter through their sum.

\begin{proposition}\label{prop:ternary-minimal}
If $2\le m\le n-2$, the codes of Theorem~\ref{thm:ternary-S} satisfy the Ashikhmin--Barg condition and are therefore minimal. If $m=n-1$ they are not minimal.
\end{proposition}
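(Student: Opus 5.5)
Both claims can be read off the weight formula of Theorem~\ref{thm:ternary-S}, in the way Theorem~\ref{cor:griesmer} and Proposition~\ref{prop:complement-minimal} were handled. Write
\[
\wt(c_D(u))=\tfrac13\big(2|D|+2\cdot3^m[v=0]+E-\phi(\sigma)\big),\qquad E=3^{t_2}e(w)2^{z(w)}(-1)^{o(w)}\phi(\sigma+s_1(w)).
\]
Since $|\phi|\le2$ and the $x''$-box has size $2^{t_1}3^{t_2}$, we have $|E|\le 2\cdot2^{t_1}3^{t_2}$.

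\emph{Ashikhmin--Barg for $2\le m\le n-2$.} The plan is to bound all weights around the centre $\frac23|D|$. Set $\beta=2^{t_1}3^{t_2}=|\mathrm{box}(v_B)|$, so $\beta\le 3^{n-m}\le3^{n-2}$.

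For the largest weight we only need an upper bound. Using $E\le2\beta$, $-\phi(\sigma)\le1$ and the term $2\cdot3^m$,
\[
w_{\max}\le\tfrac13\big(2|D|+2\cdot3^m+2\beta+1\big).
\]
For the smallest weight, $v=0$ only adds to the weight, so the worst case has $v\ne0$:
\[
w_{\min}\ge\tfrac13\big(2|D|-2\beta-2\big).
\]
The condition $3w_{\min}>2w_{\max}$ then reduces to
\[
2|D|>6\beta+6+4\cdot3^m+4\beta+2 .
\]
With $|D|=3^n-3^m-\beta+1$ this becomes $2\cdot3^n>6\cdot3^m+12\beta+\text{const}$. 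Using $3^m\le3^{n-2}$ and $\beta\le3^{n-m}\le3^{n-2}$, the right side is at most $18\cdot3^{n-2}+O(1)=2\cdot3^n+O(1)$, which is borderline. So this crude estimate must be sharpened.

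The sharpening is the main obstacle, and I would try it first as follows. The term $2\cdot3^m$ and a large $E$ cannot occur together with $\phi(\sigma)=-1$, because $v=0$ forces $\sigma=0$. Also $\beta=3^{n-m}$ requires $t_2=n-m$ with $t_1=0$, and then $m\le n-2$ gives $3^m\beta=3^n$ with both factors at least $9$. This suggests splitting into the cases $v=0$ and $v\ne0$ and using $3^m+\beta\le 3^{n-2}+9$, since the product is at most $3^n$ and both factors are at least $9$ when $m\ge2$ and $n-m\ge2$. The case $n-m\ge2$ but $t_1+t_2<n-m$, where $\beta$ is small, is then easy. If the resulting inequality still fails in a corner case, for example $t_1=0$ and $t_2=n-m=2$, I would check it directly from the five or six explicit weights, exactly as in Proposition~\ref{prop:complement-minimal}. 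For $v_B$ with entries in $\{0,2\}$ this case is Theorem~\ref{thm:complement-S}.

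\emph{Non-minimality for $m=n-1$.} Here $w\in\F_3$ is a single coordinate, with $v_B\in\{1,2\}$. Following the $m=n-1$ argument of Theorem~\ref{cor:griesmer}, I would take $u=(v,w)$ with $v=0$ and $w\ne0$. Then $u\cdot x=0$ on $\F_3^m\times0$, and $u\cdot x\ne0$ on the other points of the ideal. The goal is a codeword of full weight $|D|$, which covers every other codeword. Equivalently, choose $u$ with $H_u\cap D=\emptyset$, that is $H_u\subseteq I(\GP)$; here $u=e_n$ gives $H_u=\F_3^{n-1}\times0\subseteq I(\GP)$. Hence $c_D(e_n)$ has full support, so it covers the codeword $c_D(e_1)$, which is independent of it. Therefore $\cC_D$ is not minimal.
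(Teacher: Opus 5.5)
\textbf{The Ashikhmin--Barg part has a genuine gap; the $m=n-1$ part is correct.}

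Your argument for $m=n-1$ is the paper's argument made explicit (the paper cites Theorem~\ref{cor:griesmer}). Since $H_{e_n}=\F_3^{n-1}\times0\subseteq I(\GP)$, the codeword $c_D(e_n)$ has full weight $|D|$. To finish, note that $e_n$ and $e_1+e_n$ both lie in $D$, so $c_D(e_1)$ is nonzero and not a multiple of $c_D(e_n)$. Your version also covers $m=1$, which the citation does not.

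The Ashikhmin--Barg part, however, is not proved. You reduce it to $2\cdot3^n>6\cdot3^m+12\beta+6$, observe that this is borderline, and then give only a conditional plan. Carried out with $3^m+\beta\le3^{n-2}+9$, the plan succeeds for $n\ge5$. It fails at $n=4$, $m=2$, $v_B=(2,2)$, where it would require $162>168$. The fallback you name does not apply there: Proposition~\ref{prop:complement-minimal} assumes $b\le n-m-1$, and this case has $b=n-m$. You would have to read $w_{\min}/w_{\max}=40/48$ off Table~\ref{tab:complement-S} by hand.

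The slack is lost in two places:
\begin{enumerate}
\item The lower bound you need is one-sided, and it is $E\ge-\beta$, not $E\ge-2\beta$. If $o(w)=0$ then $z(w)=t_1$, so $E=\beta\,e(w)\,\phi(\sigma+s_1(w))\in\{2\beta,0,-\beta\}$. If $o(w)\ge1$ then $|E|\le2\cdot3^{t_2}2^{t_1-1}=\beta$.
\item $w_{\max}$ should not be bounded through the formula at all. Every codeword has weight at most $|\{x:u\cdot x\ne0\}|=2\cdot3^{n-1}$.
\end{enumerate}
With these, $3w_{\min}\ge2|D|-\beta-2=2\cdot3^n-2\cdot3^m-3\beta$. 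Then $3w_{\min}>2w_{\max}$ follows from $2\cdot3^{n-1}>2\cdot3^m+3^{n-m+1}\ge2\cdot3^m+3\beta$. By convexity in $m$, this needs checking only at $m=2$ and $m=n-2$, where it reads $3^{n-1}>18$ and $4\cdot3^{n-2}>27$; both hold for $n\ge4$. This is the paper's proof, and it covers every $v_B$ uniformly with no corner cases.
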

\begin{proof}
Every nonzero codeword of $\cC_D$ has weight at most $|\{x:u\cdot x\ne0\}|=2\cdot3^{n-1}$. In the formula of Theorem~\ref{thm:ternary-S} the term $3^{t_2}e\,2^{z}(-1)^{o}\phi(\sigma+s_1)$ is at least $-3^{t_2}2^{t_1}=-|\mathrm{box}(v_B)|$ (if $o=0$ it is at least $-3^{t_2}2^{t_1}$, and if $o\ge1$ its absolute value is at most $2\cdot3^{t_2}2^{t_1-1}$), and $-\phi(\sigma)\ge-2$. Hence $3\,w_{\min}\ge2|D|-|\mathrm{box}(v_B)|-2=2\cdot3^n-2\cdot3^m-3|\mathrm{box}(v_B)|$, and the Ashikhmin--Barg inequality $3w_{\min}>2w_{\max}$ follows from $2\cdot3^{n-1}>2\cdot3^m+3^{n-m+1}\ge2\cdot3^m+3|\mathrm{box}(v_B)|$. The function $m\mapsto2\cdot3^m+3^{n-m+1}$ is convex, so it suffices to check $m=2$ and $m=n-2$, where the inequality reads $3^{n-1}>18$ and $4\cdot3^{n-2}>27$; both hold for $n\ge4$. For $m=n-1$ the vector $v_B$ has weight one and the claim is part of Theorem~\ref{cor:griesmer}.
\end{proof}

\begin{example}
(a) $n=4$, $v=(1,1,1,0)$: Theorem~\ref{thm:ternary-antichain} gives a $[73,4,48]_3$ code with weight enumerator $1+24z^{48}+18z^{49}+36z^{50}+2z^{54}$, in agreement with \cite[Theorem 4.3]{PL22} for $n=4$.
(b) $n=4$, $m=2$, $v_B=(1,1)$: Theorem~\ref{thm:ternary-S} gives a $[69,4,45]_3$ code with weight enumerator $1+22z^{45}+30z^{46}+18z^{47}+2z^{48}+2z^{51}+6z^{52}$; it is minimal by Proposition~\ref{prop:ternary-minimal}, and $w_{\min}/w_{\max}=45/52$.
\end{example}

Within this family the Griesmer bound is attained exactly when $v_B$ has weight one, which is the case $p=3$ of Theorem~\ref{cor:griesmer}; we found no further Griesmer codes for $n\le5$. The ternary analogues of the codes $\cC_f$ of Section~\ref{sec:Cf}, whose weights follow from the same character sums, are left for future work.

\section{Concluding remarks}\label{sec:conclusion}

\subsection*{Intermediate multiplicities for $p\ge5$}
For a generalized order ideal with multiplicity vector $(v_1,\dots,v_n)$ on an antichain, Lemma~\ref{lem:weight} reduces the weight of $c_D(u)$ to
\[
\sum_{y\in\F_p^*}\prod_{i}\Big(\sum_{j=0}^{v_i}\zeta^{y u_i j}\Big)=p\cdot\big|\{x\in\textstyle\prod_i[0,v_i]:u\cdot x=0\}\big|-\prod_i(v_i+1),
\]
so the weight distribution is equivalent to counting solutions of a linear equation in a box. This is trivial when every $v_i\in\{0,p-1\}$, and also when a single coordinate carries an arbitrary multiplicity (Proposition~\ref{prop:antichain-t}, Theorem~\ref{cor:griesmer}), since a one-coordinate box is a set of $t+1$ points. With several intermediate coordinates it is tractable only when the multiplicities are close to $0$ or to $p-1$, because $\sum_{j\le v}\zeta^{ja}=-\sum_{j>v}\zeta^{ja}$ for $a\ne0$ turns a factor with $v_i=p-2$ into $-\zeta^{-u_iy}$ and a factor with $v_i=p-3$ into $-(\zeta^{-u_iy}+\zeta^{-2u_iy})$. This is precisely why the down-sets treated in \cite{HKN19,MHL26} are generated by $(r,0,\dots,0)$, $(1,r,0,\dots,0)$, $(2,r,0,\dots,0)$, $(p-2,r,0,\dots,0)$ and $(p-3,r,0,\dots,0)$, and why the number of terms grows with the number of coordinates carrying an intermediate multiplicity. For $p=3$ every multiplicity is of this kind, which is what makes Section~\ref{sec:ternary} possible; for $p\ge5$ the hierarchical case with two or more intermediate multiplicities on the upper level remains open, as does the corresponding question for down-sets raised in \cite{MHL26}.

\subsection*{Two order ideals and more levels}
Families of two incomparable generalized order ideals $[m]_{p-1}\cup(\bar B_1)_{p-1}$, $[m]_{p-1}\cup(\bar B_2)_{p-1}$ are handled by \eqref{eq:incl-excl} with no new character sums, since the intersection is again of the same form. In our computations the complement codes of such families were always minimal and satisfied the Ashikhmin--Barg condition, so we did not pursue them. Hierarchical posets with three or more levels lead, in the saturated notion, to defining sets built from nested blocks $\{-1\}^{m_1}\times\{-1\}^{m_2}\times\cdots$, and the computation of Lemma~\ref{lem:charsum} goes through level by level; we expect the number of weights to stay bounded in terms of the number of levels.

\subsection*{Optimality of $\cC_f$}
The codes of Theorems~\ref{thm:main} and~\ref{thm:U} are far from the Griesmer bound; for $p=3$, $n=4$, $m=2$ the Griesmer bound allows $d\le53$ for an $[80,5]_3$ code, against $d=16$ and $d=37$ respectively. As in the binary case \cite[Theorem 6.3]{HKWY20}, their interest lies in minimality outside the Ashikhmin--Barg range, and in the fact that the saturated and unsaturated notions give codes of the same length and dimension but different minimum distances. Whether the corresponding secret sharing schemes have access structures that can be described in terms of the poset, as in \cite{HKWY20}, is a natural question that we leave open.

\end{document}